\newif\iffinal
\newcommand{\TODO}[1]%
{\par\fbox{\begin{minipage}{0.9\linewidth}\textbf{TODO:} #1\end{minipage}}\par}
\newcommand{\Anm}[1]%
{\par\fbox{\begin{minipage}{0.9\linewidth}\textbf{Anmerkung:} #1\end{minipage}}\par}
\DeclareMathOperator{\adj}{adj}
\DeclareMathOperator{\lc}{lc}
\newcommand{\Abar}{\overline{A}}
\DeclarePairedDelimiter{\abs}{\lvert}{\rvert}
\newcommand{\alphabar}{\overline{\alpha}}
\newcommand{\blockcolumn}[2]{\begin{pmatrix}#1\\#2\end{pmatrix}}
\newcommand{\blockrow}[2]{\begin{pmatrix}#1&#2\end{pmatrix}}
\newcommand{\calC}{\mathcal{C}}
\newcommand{\calF}{\mathcal{F}}
\newcommand{\calG}{\mathcal{G}}
\newcommand{\calI}{\mathcal{I}}
\newcommand{\calP}{\mathcal{P}}
\newcommand{\calS}{\mathcal{S}}
\DeclareMathOperator{\diag}{diag}
\renewcommand{\hbar}{\overline{\pmb{h}}}
\DeclareMathOperator{\GL}{GL}
\DeclareMathOperator{\Int}{Int}
\newcommand{\qbar}{\overline{\pmb{q}}}
\newcommand{\Rbar}{\overline{R}}
\newcommand{\Sbar}{\overline{S}}
\newcommand{\Tbar}{\overline{T}}
\newcommand{\ybar}{\overline{\pmb{y}}}
\newcommand{\N}{\mathbb{N}}
\newcommand{\Q}{\mathbb{Q}}
\newcommand{\Z}{\mathbb{Z}}
\newcommand{\nullideal}[3][]{\mathsf{N}^{#1}_{#2}(#3)}
\newcommand{\mccoymoduleshort}[1]{\operatorname{\mathcal{O}}_{#1}}
\newcommand{\mccoymodule}[2]{\mccoymoduleshort{#1}(#2)}
\newcommand{\Mn}[1]{\operatorname{M}_n(#1)}
\newcommand{\successor}{\operatorname{succ}}
\newtheorem*{rep@theorem}{\rep@title}
\newcommand{\newreptheorem}[2]{%
\newenvironment{rep#1}[1]{%
 \def\rep@title{#2 \ref{##1}}%
 \begin{rep@theorem}}%
 {\end{rep@theorem}}}
\newtheorem{theorem}{Theorem}
\newtheorem{proposition}{Proposition}[section]
\newtheorem{corollary}[proposition]{Corollary}
\newtheorem{lemma}[proposition]{Lemma}
\theoremstyle{definition}
\newtheorem{definition}[proposition]{Definition}
\newtheorem{example}[proposition]{Example}
\newtheorem*{remark}{Remark}
\author{Clemens Heuberger}
\address{Institut f\"ur Mathematik\\Alpen-Adria-Universit\"at Klagenfurt\\
 Universit\"atsstra\ss e 65--67\\9020 Klagenfurt am W\"orthersee\\Austria}
\email{\href{mailto:clemens.heuberger@aau.at}{clemens.heuberger@aau.at}}
\thanks{C.~Heuberger is supported by the Austrian Science Fund (FWF):
P~24644-N26}
\author{Roswitha Rissner}
\address{Institut für Analysis und Zahlentheorie\\TU Graz\\Kopernikusgasse 
  24\\8010 Graz\\Austria}
\email{\href{mailto:roswitha.rissner@tugraz.at}{roswitha.rissner@tugraz.at}}
\thanks{R.~Rissner is supported by the Austrian Science Fund (FWF): P~27816-N26}
\title{Computing $J$-ideals of a matrix over a principal ideal domain}
\keywords{matrix, null ideal, minimal polynomial}
\subjclass[2010]{13F20; 11C08, 15A15, 15B33, 15B36}
\begin{document}

\begin{abstract}
Given a square matrix $B$ over a principal ideal domain $D$ and an ideal $J$
of $D$, the $J$-ideal of $B$ consists of the polynomials $f\in D[X]$ such that
all entries of $f(B)$ are in $J$.
It has been shown that in order to determine all $J$-ideals of $B$ it suffices
to compute a generating set of the $(p^t)$-ideal of $B$ for finitely 
many prime powers $p^t$. Moreover, it is known that a $(p^t)$-ideal is generated by a 
set of polynomials of the form $p^{t-s}\nu_s$ for certain $s\le t$ where each $\nu_s$ is a monic 
polynomial of minimal degree in the $(p^s)$-ideal of $B$.
However, except for the case of diagonal matrices, it was not known how to determine these 
polynomials explicitly.
We present an algorithm which allows us to compute the polynomials $\nu_s$ for general square 
matrices. 
Exploiting one of McCoy's theorems we first compute some set of generators of the $(p^s)$-ideal of 
$B$ which then can be used to determine $\nu_s$.

\end{abstract}

\maketitle

\section{Introduction} 
 If $B \in \Mn{R}$ is a square matrix over a commutative ring $R$ and $J$ is an ideal of $R$, the $J$-ideal of $B$ is defined as 
 \begin{equation*}
   \nullideal{J}{B} = \{f\in R[X] \mid f(B)\in \Mn{J}\}.
 \end{equation*}
These ideals have been introduced in \cite{Rissner2016} and arise naturally in the study of integer-valued polynomials on a matrix $B$ (see  
below in Section~\ref{sec:results:iv-polys}).
 
In case the underlying ring is a principal ideal domain, the structure of $J$-ideals has been studied thoroughly in \cite{Rissner2016}. It has 
been shown that it suffices to compute a finite number of polynomials in order to describe all $J$-ideals of a matrix $B$. As summarized in 
Section~\ref{sec:results:a-ideals}, it suffices to determine a monic polynomial of minimal degree in $\nullideal{(p^t)}{B}$ for a finite number of 
prime powers $p^t$ of $D$. It is further known that these so-called $(p^t)$-minimal polynomials are strongly related to the decomposition of the 
modules 
\begin{equation*}
(D/p^tD)[B + \Mn{p^tD}]=\{f(B + \Mn{p^tD}) \mid f\in (D/p^tD)[X] \}
\end{equation*}
into cyclic submodules with ascending annihilators, see Section~\ref{sec:results:module-decomposition}.
 
However, the characterization of these generating sets given in \cite{Rissner2016} is theoretic. 
Except for diagonal matrices, it was not known until now how to compute $(p^t)$-minimal polynomials. 
This paper is the algorithmic counterpart of~\cite{Rissner2016}. 
Algorithm~\ref{algorithm:everything} determines these polynomials explicitly for general square matrices $B$ with entries in a principal ideal 
domain. The iterative computation consists of two main steps. Given a generating system of the $(p^{t-1})$-ideal of 
$B$, we first determine a set $\calF$ of polynomials such that $\nullideal{(p^t)}{B} = (\calF) + 
p\nullideal{(p^{t-1})}{B}$. We then perform a couple of carefully chosen polynomial long divisions 
to compute a $(p^t)$-minimal polynomial. 

In order to determine the set $\calF$, we use a description of the null ideal of a matrix given by McCoy in \cite[Theorem~54]{McCoy1948} (see 
Lemma~\ref{lemma:mccoy}). This result allows us to translate the question to that of solving a
system of linear equations modulo $p^t$. In order to solve this linear 
system, we present a special lifting technique in Section~\ref{sec:lifting}. The application of this technique to 
the original question is considered in Section~\ref{sec:some-generators}. The topic of Section~\ref{sec:computing-minimal-polynomials} is then the 
computation of a $(p^t)$-minimal polynomial. Next, in Section~\ref{sec:unimportant-primes} we 
explain why the minimal polynomial $\mu_B$ of $B$ is a $(p^t)$-minimal polynomial for all but 
finitely many prime elements $p$. Finally, in Section~\ref{sec:finite-nr-generators} we prove that 
for the remaining prime elements $p$ it suffices to determine a finite number of $(p^i)$-minimal 
polynomials to describe the $(p^t)$-ideals for all $t\ge 0$. 
	
\section{Results}\label{sec:results}

All rings considered in this paper are assumed to be commutative with unity.
For a ring $R$ and positive integers $r$, $s$, the set of $(r\times s)$-matrices 
over $R$ is denoted by $M_{r,s}(R)$ or by $M_r(R)$ if $r=s$.

\subsection{\texorpdfstring{$(a)$}{(a)}-ideals of matrices}\label{sec:results:a-ideals}

Let $D$ be a principal ideal domain with quotient field $K$, $B\in \Mn{D}$ and $(a)$ be an ideal of 
$D$.
The aim is to describe the structure of the \emph{$(a)$-ideal}
\begin{equation*}
  \nullideal{(a)}{B}=\{ f\in D[X]\mid f(B)\equiv 0 \pmod{a}\}
\end{equation*}
of $B$.

If $a=0$, then it is easily seen that
\begin{equation*}
  \nullideal{(0)}{B} = \mu_B D[X]
\end{equation*}
where $\mu_B$ is the minimal polynomial of $B$ over the quotient field $K$ of $D$,
cf.~\cite{Brown:1998:null}.

If $ 0 \ne a = bc$ for coprime elements $b$ and $c$, then $\nullideal{a}{B} = 
c\nullideal{b}{B} + b\nullideal{c}{B}$ according to \cite[Lemma~2.9]{Rissner2016}. Since 
every element in $D$ has a decomposition into primes, it suffices to consider the case $a=p^t$ 
where $p$ is a prime element and $t\in \N$.

For almost all prime elements $p$, we have
\begin{equation*}
  \nullideal{(p^t)}{B}= \mu_B D[X] + p^tD[X]
\end{equation*}
for $t\ge 1$.
More precisely, this is the case for all primes $p$ which do not divide $\det(T)$ 
where $T$ is a matrix in $\Mn{D}\cap \GL_n(K)$ such that $TBT^{-1}$ is in rational canonical form, see Theorem~\ref{theorem:trivial-primes}. 
However, the transformation matrix $T$ is not uniquely determined and the set of prime divisors $\det(T)$ depends on the choice of $T$, see 
Example~\ref{example:candidates-not-unique}.

Thus it is sufficient to determine $\nullideal{(p^t)}{B}$ for finitely many
primes $p$. The following result is a consequence of \cite[Theorem~2.19, 
Corollary~2.23]{Rissner2016}. We give a proof below in Section~\ref{sec:finite-nr-generators}.

\begin{theorem}[{\cite[Theorem~2.19, Corollary~2.23]{Rissner2016}}]\label{theorem:structure} 
Let $p$ be
a prime element of $D$. Then there is a finite set $\calS_p$ of positive integers and monic 
polynomials $\nu_{(p,s)}$ for $s\in\calS_p$  such that for $t\ge 1$,
\begin{equation*}
    \nullideal{(p^t)}{B}=\mu_BD[X] + p^tD[X] + 
      \sum_{\substack{s\in\calS_p \\ s \le  b(t) }} p^{\max\{0,t-s\}}\nu_{(p,s)}D[X]  
  \end{equation*}
  holds where $ b(t) = \inf\{r\in \calS_p \mid r \ge t\}$. The degree of
  $\nu_{(p,s)}$ is strictly increasing in $s\in \calS_p$ and $\nu_{(p,s)}$ is a monic
  polynomial of minimal degree in $\nullideal{(p^s)}{B}$. If $t\le
  \max\calS_p$, then the summand $\mu_BD[X]$ can be omitted.
\end{theorem}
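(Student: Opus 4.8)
The plan is to deduce Theorem~\ref{theorem:structure} from \cite[Theorem~2.19, Corollary~2.23]{Rissner2016} by analysing the degrees of the $(p^s)$-minimal polynomials and then normalising the generating sets those results provide.

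First I would record some elementary facts about degrees. For $s\ge 0$ let $d_s$ be the degree of a monic polynomial of minimal degree in $\nullideal{(p^s)}{B}$; such a polynomial exists because $\mu_B$ is one. From $\nullideal{(p^{s+1})}{B}\subseteq\nullideal{(p^s)}{B}$ and $p\,\nullideal{(p^s)}{B}\subseteq\nullideal{(p^{s+1})}{B}$ one sees that $(d_s)_{s\ge 0}$ is non-decreasing and bounded above by $\deg\mu_B$, while $d_s\ge 1$ for $s\ge 1$ since the constant polynomial $1$ never lies in $\nullideal{(p^s)}{B}$ for $s\ge 1$. Hence $(d_s)$ is eventually constant, and its limit equals $\deg\mu_B$: a monic polynomial of smaller degree lying in $\nullideal{(p^s)}{B}$ for every $s$ would vanish at $B$ and hence be a multiple of $\mu_B$, which is absurd. (This stabilisation is also what \cite[Corollary~2.23]{Rissner2016} provides.) Now put
\begin{equation*}
  \calS_p=\{\, s\ge 1 \mid d_s>d_{s-1}\,\}.
\end{equation*}
Then $\calS_p$ is finite and non-empty, $\max\calS_p$ is the least $s$ with $d_s=\deg\mu_B$, and one may take $\nu_{(p,\max\calS_p)}=\mu_B$; choosing for each $s\in\calS_p$ a monic $\nu_{(p,s)}\in\nullideal{(p^s)}{B}$ of the minimal degree $d_s$, the map $s\mapsto\deg\nu_{(p,s)}$ is strictly increasing on $\calS_p$. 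This already yields every claim of the theorem except the displayed formula.

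For that formula the inclusion ``$\supseteq$'' is immediate: $\mu_B(B)=0$, $p^tD[X]\subseteq\nullideal{(p^t)}{B}$ trivially, and for $s\in\calS_p$ one has $p^{\max\{0,t-s\}}\nu_{(p,s)}(B)\equiv 0\pmod{p^{\max\{s,t\}}}$, which is divisible by $p^t$. For ``$\subseteq$'' (and the precise shape of the generating set) I would compare the system supplied by \cite[Theorem~2.19]{Rissner2016}, which is already of the form $p^{t-s}\nu_{(p,s)}$ with $\nu_{(p,s)}$ a $(p^s)$-minimal polynomial for finitely many $s$, against the system in the statement. Using the monotonicity of the ideals $\nullideal{(p^s)}{B}$ and the freedom to choose the $\nu_{(p,s)}$ compatibly so as to form a divisibility chain ending in $\mu_B$, one checks that a generator $p^{t-s}\nu_{(p,s)}$ with $s$ not a jump of $(d_s)$ lies in the span of the others, while the generator indexed by $b(t)$ — appearing with coefficient $p^{0}$ because $b(t)\ge t$ — is the one that must be kept. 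This reindexing produces exactly the sum over $\{\, s\in\calS_p\mid s\le b(t)\,\}$. Finally, if $t\le\max\calS_p$ the surviving summand with index $b(t)\in\calS_p$ is $\nu_{(p,b(t))}D[X]$, and since $\nu_{(p,b(t))}$ divides $\mu_B$ this already contains $\mu_BD[X]$, which may therefore be omitted; whereas if $t>\max\calS_p$ every $s\in\calS_p$ satisfies $t-s\ge 1$, so all coefficients of every element of $p^tD[X]+\sum_{s\in\calS_p}p^{t-s}\nu_{(p,s)}D[X]$ are divisible by $p$, and $\mu_B$ genuinely has to be added.

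The degree bookkeeping in the second paragraph and the inclusion ``$\supseteq$'' are routine. I expect the real work — and the main obstacle — to lie in the normalisation step: pinning down precisely which generators from \cite[Theorem~2.19]{Rissner2016} are redundant and that the surviving index set is exactly $\{\, s\in\calS_p\mid s\le b(t)\,\}$ with the stated coefficients. This requires the compatible choice of the $(p^s)$-minimal polynomials together with a careful, degree-by-degree analysis of the $p$-adic valuations of the leading coefficients of elements of $\nullideal{(p^s)}{B}$.
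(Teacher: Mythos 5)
There is a genuine gap, and it is located exactly where you predicted the ``real work'' to be, but the problem is worse than a missing verification: your choice of $\calS_p$ is the wrong set. You take $\calS_p=\{s\ge 1 \mid d_s>d_{s-1}\}$, the indices where the minimal degree jumps \emph{up to} a new value (including the final jump to $\deg\mu_B$, with $\nu_{(p,\max\calS_p)}=\mu_B$). The theorem needs, for each degree value below $\deg\mu_B$, the \emph{largest} $s$ attaining it: only then is $\nu_{(p,b(t))}$ an element of $\nullideal{(p^{b(t)})}{B}\subseteq\nullideal{(p^t)}{B}$ of degree $d_t$, i.e.\ a valid $(p^t)$-minimal polynomial appearing with exponent $p^0$. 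With your set, $b(t)$ points to the next jump, whose polynomial has degree $>d_t$, while the earlier jump index $s<t$ carries the factor $p^{t-s}$, so the formula produces no monic generator of degree $d_t$ at all. The paper's own worked example shows this concretely: there $d_1=d_2=2$, $d_3=3=\deg\mu_B$, the correct set is $\calS_2=\{2\}$, but your recipe gives $\{1,3\}$; for $t=2$ your right-hand side is $\mu_B\Z[X]+4\Z[X]+2\nu_{(2,1)}\Z[X]+\nu_{(2,3)}\Z[X]$, which modulo $2$ reduces to $\overline{\nu_{(2,3)}}\,(\Z/2\Z)[X]$ with $\overline{\nu_{(2,3)}}$ cubic, and hence cannot contain $\nu_2=X^2+3X+2\in\nullideal{(4)}{B}$. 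The underlying error in your ``reindexing'' step is the direction of the implication: a $(p^{s'})$-minimal polynomial is automatically $(p^s)$-minimal for $s\le s'$ of the same degree, but not conversely, so it is the later index of each degree level that must be kept, not the first.

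Two further ingredients of your argument do not hold up. First, the ``freedom to choose the $\nu_{(p,s)}$ compatibly so as to form a divisibility chain ending in $\mu_B$'' is false in $D[X]$: for $B=\begin{pmatrix}1&2\\4&1\end{pmatrix}\in M_2(\Z)$ and $p=2$ one has $\mu_B=X^2-2X-7$ irreducible over $\Q$, while $X+1\in\nullideal{(2)}{B}$, so the $(2)$-minimal polynomials have degree $1$ and none of them divides $\mu_B$; thus both your redundancy argument and your justification for omitting $\mu_BD[X]$ when $t\le\max\calS_p$ collapse. Second, your argument that $(d_s)$ stabilizes at $\deg\mu_B$ tacitly replaces ``for every $s$ there is a monic polynomial of degree $d$ in $\nullideal{(p^s)}{B}$'' by ``one monic polynomial of degree $d$ lies in all $\nullideal{(p^s)}{B}$''; these polynomials may vary with $s$, and closing this gap needs a $p$-adic limit argument (this is \cite[Proposition~2.22]{Rissner2016}, not a triviality). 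The paper avoids all of this by importing the precise bookkeeping from \cite{Rissner2016}: it sets $\calS_p=\calI_m\setminus\{0,m\}$ with $\calI_t$ the $t$-th index set and $m$ as in \cite[Corollary~2.23]{Rissner2016}, uses $\nullideal{(p^t)}{B}=\sum_{i\in\calI_t}p^{t-i}\nu_{(p,i)}D[X]$ together with $\nullideal{(p^t)}{B}=\mu_BD[X]+p^{t-m}\nullideal{(p^m)}{B}$ for $t\ge m$, and for $t<m$ the identities $\calI_t\setminus\{0,t\}=\calS_p\cap\{1,\dots,t-1\}$ and the fact that $\nu_{(p,b(t))}$ is a feasible choice of $\nu_{(p,t)}$. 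If you want a self-contained proof you would need to reprove those facts; your degree-jump shortcut does not substitute for them.
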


Whereas \cite{Rissner2016} could only show the existence of these $\calS_p$,
and $\nu_{(p,s)}$, $s\in\calS_p$, the present paper
presents an algorithm (Algorithm~\ref{algorithm:everything}) to explicitly compute these quantities. Thus the
structure of $\nullideal{(a)}{B}$ is completely understood.

For simplicity, we omit the indices $p$ and write $\calS$ and $\nu_s$
instead of $\calS_p$ and $\nu_{(p,s)}$, respectively, when the prime $p$ is clear
from the context.

An implementation of Algorithm~\ref{algorithm:everything} has been included~\cite{trac:21992} 
in the free open-source mathematics software system SageMath~\cite{SageMath:2016:7.5} as
method \verb+p_minimal_polynomials+ of a matrix; the $(a)$-ideal of $B$ can be computed by
the method \verb+null_ideal+ of $B$.

\subsection{Integer-valued polynomials}\label{sec:results:iv-polys}

Let $D$ be a principal ideal domain with quotient field $K$ and $B\in\Mn{D}$. Then
\begin{equation*}
  \Int(B, \Mn{D})\coloneqq \{ f\in K[X] \mid f(B)\in\Mn{D}\}
\end{equation*}
is called the ring of integer-valued polynomials on $B$. As before, the minimal
polynomial of $B$ over $K$ is denoted by $\mu_B$.

If a polynomial $f\in K[X]$ is written as $f=g/d$ for some $g\in D[X]$ and $d\in D$,
then $f\in\Int(B, \Mn{D})$ holds if and only if $g\in \nullideal{(d)}{B}$.
Thus Theorem~\ref{theorem:structure} translates into the following corollary
proved in \cite{Rissner2016}.

\begin{corollary}[{\cite[Theorem~4.3]{Rissner2016}}]
  With the above notations, there is a finite set $\calP$ of prime elements such that
  \begin{equation*}
    \Int(B, \Mn{D})= \mu_B K[X] + D[X] + \sum_{p\in\calP}\sum_{s\in\calS_p} 
\frac1{p^{s}}\nu_{(p,s)}D[X]
  \end{equation*}
  where $\calS_p$ and $\nu_{(p,s)}$, $s\in\calS_p$, are the set and polynomials
  from Theorem~\ref{theorem:structure}.
\end{corollary}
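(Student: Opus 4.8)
The plan is to deduce the statement from Theorem~\ref{theorem:structure} by clearing denominators and exploiting the multiplicativity of the $(a)$-ideal in $a$. The inclusion $\supseteq$ is immediate: evaluation at $B$ is a ring homomorphism $K[X]\to\Mn{K}$, so $\mu_B(B)=0$ gives $\mu_B K[X]\subseteq\Int(B,\Mn{D})$; trivially $D[X]\subseteq\Int(B,\Mn{D})$; and since $\nu_{(p,s)}\in\nullideal{(p^s)}{B}$ means $\nu_{(p,s)}(B)\in\Mn{p^sD}$, we get $\frac1{p^s}\nu_{(p,s)}(B)\in\Mn{D}$ and hence $\frac1{p^s}\nu_{(p,s)}D[X]\subseteq\Int(B,\Mn{D})$.

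For the reverse inclusion, let $f\in\Int(B,\Mn{D})$ and write $f=g/d$ with $g\in D[X]$ and $0\ne d\in D$, so that $g\in\nullideal{(d)}{B}$. Factoring $d=\prod_{p\mid d}p^{t_p}$ into prime powers and iterating the identity $\nullideal{(bc)}{B}=c\,\nullideal{(b)}{B}+b\,\nullideal{(c)}{B}$ for coprime $b,c$ from \cite[Lemma~2.9]{Rissner2016}, one obtains $\nullideal{(d)}{B}=\sum_{p\mid d}(d/p^{t_p})\,\nullideal{(p^{t_p})}{B}$. Hence $g=\sum_{p\mid d}(d/p^{t_p})g_p$ with $g_p\in\nullideal{(p^{t_p})}{B}$, and $f=\sum_{p\mid d}g_p/p^{t_p}$; it therefore suffices to place each $g_p/p^{t_p}$ in the right-hand module.

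Fix such a $p$, put $t=t_p$, and apply Theorem~\ref{theorem:structure}: in the notation of that theorem (with $b(t)=\inf\{r\in\calS_p\mid r\ge t\}$) we may write $g_p=\mu_B u+p^tv+\sum_{s\in\calS_p,\,s\le b(t)}p^{\max\{0,\,t-s\}}\,\nu_{(p,s)}\,w_s$ with $u,v,w_s\in D[X]$. Dividing by $p^t$ sends $\mu_B u$ into $\mu_B K[X]$, sends $p^tv$ into $D[X]$, and, since $\max\{0,t-s\}-t=-\min\{t,s\}$ and $\min\{t,s\}\le s$, sends each remaining summand into $\frac1{p^{\min\{t,s\}}}\nu_{(p,s)}D[X]\subseteq\frac1{p^s}\nu_{(p,s)}D[X]$. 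Summing over $p$ then places $f$ in $\mu_B K[X]+D[X]+\sum_p\sum_{s\in\calS_p}\frac1{p^s}\nu_{(p,s)}D[X]$.

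Finally, only finitely many primes contribute nontrivially: by the remarks preceding Theorem~\ref{theorem:structure}, every prime $p$ not dividing $\det(T)$---for $T\in\Mn{D}\cap\GL_n(K)$ bringing $B$ into rational canonical form---satisfies $\nullideal{(p^t)}{B}=\mu_B D[X]+p^tD[X]$ for all $t\ge 1$, i.e.\ $\calS_p=\emptyset$. As $\det(T)$ has finitely many prime divisors, $\calP\coloneqq\{p:\calS_p\ne\emptyset\}$ is finite, and for $p\notin\calP$ the summand $g_p/p^{t_p}$ above already lies in $\mu_B K[X]+D[X]$, which yields the claimed equality. The argument is largely bookkeeping rather than an obstacle; the one point that needs care is the exponent computation $\max\{0,t-s\}-t=-\min\{t,s\}$ together with the observation that the resulting term lands inside $\frac1{p^s}\nu_{(p,s)}D[X]$, plus the appeal to finiteness of the bad-prime set.
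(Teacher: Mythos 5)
Your proof is correct and follows essentially the route the paper itself indicates: it only records the translation that $f=g/d\in\Int(B,\Mn{D})$ if and only if $g\in\nullideal{(d)}{B}$ and defers the rest to \cite[Theorem~4.3]{Rissner2016}, and your argument fills this in exactly as intended --- splitting $d$ into prime powers via \cite[Lemma~2.9]{Rissner2016}, applying Theorem~\ref{theorem:structure} to each $(p^{t_p})$-component, and getting finiteness of $\calP$ from Theorem~\ref{theorem:trivial-primes}. The only cosmetic point is that taking $\calP$ to be the set of primes dividing $\det(T)$ avoids having to argue that $\calS_p=\emptyset$ (rather than merely that the extra summands are unnecessary) for the trivial primes.
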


As a consequence, Algorithm~\ref{algorithm:everything} completely describes the
structure of $\Int(B, \Mn{D})$.

An implementation has been submitted as method
\verb+integer_valued_polynomials+ for inclusion in SageMath.

\subsection{Module decompositions}\label{sec:results:module-decomposition} 

Again, let $D$ be a principal ideal domain, $B\in\Mn{D}$ and $p^t$ a prime power of $D$. The $(D/p^tD)$-module 
\begin{equation*}
(D/p^tD)[B + \Mn{p^tD}] = \{f(B + \Mn{p^tD}) \mid f\in (D/p^tD)[X] \}
\end{equation*}
is a finitely generated module over a principal ideal ring. According to \cite[Theorem~15.33]{Brown1993} this module 
decomposes into a direct sum of cyclic submodules with uniquely determined annihilators (the invariant factors). As shown in \cite{Rissner2016}, this 
decomposition is strongly related to the generating set described in Theorem~\ref{theorem:structure} which is stated in the next theorem. This 
further implies that the set $\calS_p$ and the degrees of the polynomials $\nu_{(p,s)}$, $s\in 
\calS_p$ are uniquely determined.

\begin{theorem}[{\cite[Theorem~3.5]{Rissner2016}}]
Let $B\in \Mn{D}$ and for a prime $p$ of $D$, let  $\calS_p$ and $\nu_{(p,s)}$, $s\in \calS_p$, the 
set and polynomials from 
Theorem~\ref{theorem:structure}. We order $\{\nu_{(p,s)} \mid s\in\calS_p\} \cup \{\mu_B\}$ by 
ascending degree and define $\successor(\nu_{(p,s)})$ for 
$s\in\calS_p$ to be the successor of $\nu_{(p,s)}$ with respect to this ordering. 
Finally, let $d = \min\{\deg(\nu_{(p,s)})\mid s\in \calS_p\}$ and $d_s = 
\deg(\successor(\nu_{(p,s)})) - \deg(\nu_{(p,s)})$. Then
\begin{equation*}
(D/p^tD)[B +\Mn{p^tD}] = (D/p^{t}D)^d \oplus \bigoplus_{\substack{s\in \calS_p \\ s \le t}} (D/p^{t-s}D)^{d_s}
\end{equation*}
for $t\ge 0$.
\end{theorem}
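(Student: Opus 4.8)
The plan is to leverage Theorem~\ref{theorem:structure} together with the structure theory of finitely generated modules over the principal ideal ring $D/p^tD$. First I would translate the generating set of $\nullideal{(p^t)}{B}$ into the annihilator structure of the module $M_t \coloneqq (D/p^tD)[B+\Mn{p^tD}]$. The key observation is that a polynomial $f\in D[X]$ lies in $\nullideal{(p^t)}{B}$ exactly when its residue class annihilates $M_t$, so the ideal $\nullideal{(p^t)}{B}/p^tD[X]$ is precisely the annihilator ideal of the cyclic $(D/p^tD)[X]$-module generated by $B+\Mn{p^tD}$. One then reads off from Theorem~\ref{theorem:structure} a list of monic generators of this annihilator: the polynomials $\mu_B$ (if $t>\max\calS_p$) and $p^{\max\{0,t-s\}}\nu_{(p,s)}$ for $s\in\calS_p$ with $s\le b(t)$, together with $p^t$ itself.

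The second step is to invoke the uniqueness part of the structure theorem (\cite[Theorem~15.33]{Brown1993}) to write $M_t$ as a direct sum of cyclic modules $\bigoplus_j (D/p^tD)[X]/(g_j)$ with $g_1\mid g_2\mid\cdots$ dividing the characteristic-type data, i.e.\ the cyclic decomposition of $M_t$ as a $(D/p^tD)[X]$-module. Since $D/p^tD$ is a local ring with maximal ideal $(p)$ and $p$ nilpotent, each cyclic factor $(D/p^tD)[X]/(g_j)$ is, as a $D/p^tD$-module, free of rank $\deg g_j$ provided $g_j$ is monic of the form $p^{a_j}\cdot(\text{monic of degree }\deg g_j - \text{?})$ — more precisely, I would argue that after passing to the local ring each invariant factor can be taken monic, and a cyclic module $(D/p^tD)[X]/(h)$ with $h$ monic of degree $\delta$ is $(D/p^tD)$-free of rank $\delta$, whereas $(D/p^tD)[X]/(p^a h)$ with $h$ monic of degree $\delta$ decomposes as $(D/p^tD)^{\delta'}\oplus(D/p^{a}D)^{\delta-\delta'}$ where $\delta'$ is the degree of the next smaller invariant factor. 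Matching this against the generators listed above, the jumps in degree between consecutive polynomials in the ordered list $\{\mu_B\}\cup\{\nu_{(p,s)}\}$ govern exactly how many summands of each torsion type $D/p^{t-s}D$ appear, which is the content of the claimed formula with $d=\min_s\deg\nu_{(p,s)}$ the free rank and $d_s$ the successive degree gaps.

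Concretely, I would proceed as follows. (1) Fix $t\ge 0$ and let $s_1<s_2<\cdots<s_k$ enumerate those $s\in\calS_p$ with $s\le t$, and set $s_{k+1}=\infty$ with $\nu_{(p,\infty)}\coloneqq\mu_B$ if $t>\max\calS_p$, or truncate appropriately when $t\le\max\calS_p$ (using the last sentence of Theorem~\ref{theorem:structure} that lets us drop $\mu_BD[X]$). (2) Show that the annihilator of $M_t$ is generated by $p^t$ and $p^{t-s_i}\nu_{(p,s_i)}$ for the relevant $i$; this is a direct rewriting of Theorem~\ref{theorem:structure} modulo $p^tD[X]$. (3) Use the classification of cyclic torsion modules over the discrete-valuation-like quotient $D/p^tD$: a cyclic $(D/p^tD)[X]$-module with annihilator generated by $p^{e}g$, $g$ monic, sits in a short exact sequence exhibiting it as $(D/p^{t-e}D)^{\deg g}\oplus(\text{free part})$, and chaining these over the invariant-factor tower yields the stated direct sum. (4) Collect terms: the $\mu_B$-summand contributes the free part $(D/p^tD)^d$ (or is absorbed when $t\le\max\calS_p$), and each gap $d_{s}=\deg\successor(\nu_{(p,s)})-\deg\nu_{(p,s)}$ contributes $(D/p^{t-s}D)^{d_s}$.

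The main obstacle I expect is step~(3): carefully justifying the decomposition of a cyclic module $(D/p^tD)[X]/(p^e g)$ into $D/p^tD$- and $D/p^{t-e}D$-parts, and more importantly explaining why the invariant-factor structure of $M_t$ over $(D/p^tD)[X]$ is encoded by precisely the ordered list of $\nu$'s and $\mu_B$ rather than by some coarser or finer data. This is really a bookkeeping argument about how the $J$-ideal generators of Theorem~\ref{theorem:structure} — which are $\emph{all}$ the monic polynomials of minimal degree in the various $\nullideal{(p^s)}{B}$ — correspond to the breakpoints of the elementary-divisor filtration; one must check that there are no ``hidden'' invariant factors and that the degree-strictly-increasing property of the $\nu_s$ guarantees each contributes exactly $d_s$ cyclic summands. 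Once this correspondence is nailed down, assembling the direct sum formula and verifying it for all $t\ge 0$ (including the boundary cases $t=0$, $t\le\max\calS_p$, and $t>\max\calS_p$) is routine.
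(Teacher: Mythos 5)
First, a remark on the comparison you are asking for: the paper does not prove this statement at all --- it is quoted from \cite[Theorem~3.5]{Rissner2016} and used as a black box --- so your proposal can only be judged on its own merits. Your overall strategy is the natural (and surely the intended) one: the evaluation map $(D/p^tD)[X]\to(D/p^tD)[B+\Mn{p^tD}]$ is surjective with kernel $\nullideal{(p^t)}{B}/p^tD[X]$, so the module is the quotient of $(D/p^tD)[X]$ by the ``staircase'' ideal whose generators Theorem~\ref{theorem:structure} provides, and the decomposition should be read off from the degrees and $p$-power coefficients of those generators.

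There are, however, genuine gaps and errors in how you carry this out. (i) The module $M_t=(D/p^tD)[B+\Mn{p^tD}]$ is \emph{cyclic} over $(D/p^tD)[X]$ (generated by the identity matrix), so there is no nontrivial invariant-factor decomposition $\bigoplus_j(D/p^tD)[X]/(g_j)$ over that ring; moreover $(D/p^tD)[X]$ is not a principal ideal ring for $t\ge2$, so \cite[Theorem~15.33]{Brown1993} does not apply over it. That theorem applies to $M_t$ as a module over $D/p^tD$, and even then it only gives \emph{uniqueness} of a cyclic decomposition --- it does not compute one for you. (ii) Your step~(3) is false as stated: for $e\ge1$ and $g$ monic, $(D/p^tD)[X]/(p^eg)$ is not even finitely generated as a $D/p^tD$-module, and since the ideal $\overline{\nullideal{(p^t)}{B}}$ is not principal you cannot ``chain'' such principal quotients; you must treat the whole staircase $\{p^{t-s}\nu_{(p,s)}\}\cup\{\text{top monic generator}\}$ at once, showing that the residues of $X^j$ for $j$ below the top degree generate $M_t$ and that $p^{c_j}X^j$ reduces to lower degree with $c_j=t$ for $j<d$ and $c_j=t-s$ for $\deg\nu_{(p,s)}\le j<\deg\successor(\nu_{(p,s)})$. (iii) Most importantly, the step you yourself label ``the main obstacle'' --- that there are no hidden relations, i.e.\ that these generators exhaust the ideal degree by degree so that the resulting filtration has graded pieces exactly $D/p^{c_j}D$ and splits --- is precisely where the mathematical content lies, and it is not done. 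It requires the minimality assertions of Theorem~\ref{theorem:structure}: since $D$ is a domain, $p^ef\in\nullideal{(p^t)}{B}$ if and only if $f\in\nullideal{(p^{t-e})}{B}$, and a leading coefficient of too small $p$-adic valuation in a relation of too small degree would contradict Lemma~\ref{lemma:degree-lower-bound}. Without (iii) your argument only shows that $M_t$ is a \emph{quotient} of the right-hand side, not that the two are isomorphic.
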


As a consequence, Algorithm~\ref{algorithm:everything} completely determines the
structure of $(D/p^tD)[B +\Mn{p^tD}]$.

\section{Lifting}\label{sec:lifting}

In this section, we provide the lifting procedure which allows the recursive computation of the $(p^t)$-minimal polynomials in 
Section~\ref{sec:computing-minimal-polynomials}.

Let $D$ be a principal ideal domain, $p$ be a prime element of $D$ and $d\ge c\ge 1$. The projection of some $z\in D$ to the
field $D/pD$ is denoted by $\overline{z}$. We extend this
notation to polynomials in $D[X]$ and vectors in $D[X]^d$ as well as matrices
in $M_{c, d}(D[X])$. The identity matrix is denoted by $I$.

Let $A\in M_{c, d}(D[X])$ be a matrix such that $\Abar$ has rank $c$. For
$t\ge 0$, we consider the set
\begin{equation*}
  \mccoymoduleshort{t}\coloneqq\mccoymodule{t}{A}\coloneqq \{ \pmb{f}\in D[X]^d \mid  A\pmb{f} 
\equiv 0 \pmod{p^t}\}.
\end{equation*}
This is clearly a $D[X]$-module. For $t=0$, we obviously have $\mccoymoduleshort{t}=D[X]^d$.

A recursive method for computing $\mccoymoduleshort{t}$ is given in Algorithm~\ref{algorithm:lifting}.

\begin{algorithm}
  \begin{algorithmic}
    \REQUIRE $t\ge 1$, $G\in M_{d,s}(D[X])$ such that the columns of $\blockrow{p^{t-1}I}{G}$
    are generators of $\mccoymoduleshort{t-1}$
    \ENSURE $F\in M_{d, d-c+s}(D[X])$ such that the columns of
    $\begin{pmatrix}{p^{t}I}&{F}&pG\end{pmatrix}$ are generators of $\mccoymoduleshort{t}$
    \STATE $R\coloneqq \frac{1}{p^{t-1}} AG$
    \STATE Let $S\in M_{c}(D[X])$ and $T\in M_{d+s}(D[X])$ such
    that $\Sbar$ and $\Tbar$ are invertible and $\Sbar\blockrow{\Abar}{\Rbar}
    \Tbar=\diag_{c\times (d+s)}(\alphabar_1,\ldots, \alphabar_c)$ with
    $\alphabar_1\mid \alphabar_2\mid \cdots\mid \alphabar_c$ (Smith normal
    form)
    \STATE Let $F\in M_{d, d-c+s}(D[X])$ consist of the
    last $d-c+s$ columns of $\blockrow{p^{t-1}I}{G}T$
  \end{algorithmic}
  \caption{Recursive computation of $\mccoymoduleshort{t}$}
  \label{algorithm:lifting}
\end{algorithm}

\begin{proposition}\label{prop:lifting}
  Algorithm~\ref{algorithm:lifting} is correct.
\end{proposition}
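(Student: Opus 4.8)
The plan is to show that the columns of $\blockrow{p^t I}{\blockrow{F}{pG}}$ both lie in $\mccoymoduleshort{t}$ and generate it. The easy direction is containment: clearly $p^t I$ has columns in $\mccoymoduleshort{t}$; the columns of $pG$ lie in $\mccoymoduleshort{t}$ because the columns of $G$ lie in $\mccoymoduleshort{t-1}$ by hypothesis (since $\blockrow{p^{t-1}I}{G}$ generates $\mccoymoduleshort{t-1}$, in particular $AG \equiv 0 \pmod{p^{t-1}}$, so $pAG \equiv 0 \pmod{p^t}$); and for $F$, note that $F$ consists of the last $d-c+s$ columns of $\blockrow{p^{t-1}I}{G}T$, so $AF$ consists of the last $d-c+s$ columns of $\blockrow{A}{R}T \cdot p^{t-1}$ after noting $A\blockrow{p^{t-1}I}{G} = p^{t-1}\blockrow{A}{R}$. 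By the Smith normal form relation, the reduction of $\blockrow{\Abar}{\Rbar}\Tbar$ is $\diag_{c\times(d+s)}(\alphabar_1,\dots,\alphabar_c)$, whose last $d-c+s$ columns vanish (there are only $c$ nonzero diagonal entries among $d+s \ge c$ columns), so $\overline{AF} = 0$, i.e.\ $AF \equiv 0 \pmod p$, hence $p^{t-1}\cdot(\text{stuff}) \cdot$ — more precisely $AF \equiv 0 \pmod{p^t}$ after the $p^{t-1}$ factor is restored. This gives $\subseteq$.

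For the reverse inclusion, take $\pmb{f}\in\mccoymoduleshort{t}\subseteq\mccoymoduleshort{t-1}$. Since $\blockrow{p^{t-1}I}{G}$ generates $\mccoymoduleshort{t-1}$, write $\pmb{f} = \blockrow{p^{t-1}I}{G}\pmb{v}$ for some $\pmb{v}\in D[X]^{d+s}$. Then $0 \equiv A\pmb{f} = p^{t-1}\blockrow{A}{R}\pmb{v} \pmod{p^t}$, so $\blockrow{A}{R}\pmb{v}\equiv 0 \pmod p$, i.e.\ $\overline{\blockrow{\Abar}{\Rbar}}\,\overline{\pmb{v}} = 0$ in $(D/pD)[X]^c$. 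Using the Smith normal form $\Sbar\blockrow{\Abar}{\Rbar}\Tbar = \diag(\alphabar_1,\dots,\alphabar_c)$, set $\overline{\pmb{w}} = \Tbar^{-1}\overline{\pmb{v}}$; then $\diag(\alphabar_1,\dots,\alphabar_c)\overline{\pmb{w}} = 0$. The key point is that $\Abar$ has rank $c$, which forces all $\alphabar_i \ne 0$ (the Smith form of a rank-$c$ matrix over the PID $(D/pD)[X]$ has $c$ nonzero invariant factors), hence the first $c$ coordinates of $\overline{\pmb{w}}$ vanish. So $\overline{\pmb{w}} = \blockcolumn{0}{\overline{\pmb{w}}'}$ with $\overline{\pmb{w}}'\in(D/pD)[X]^{d-c+s}$; lift $\pmb{w}'$ arbitrarily to $D[X]^{d-c+s}$ and set $\pmb{w} = \blockcolumn{0}{\pmb{w}'}$. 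Then $\pmb{v} - T\pmb{w} \equiv 0 \pmod p$, say $\pmb{v} = T\pmb{w} + p\pmb{u}$. Substituting,
\begin{equation*}
  \pmb{f} = \blockrow{p^{t-1}I}{G}\pmb{v} = \blockrow{p^{t-1}I}{G}T\pmb{w} + p\,\blockrow{p^{t-1}I}{G}\pmb{u}.
\end{equation*}
The first term equals $F\pmb{w}'$ since $\pmb{w}$ has its first $c$ entries zero and $F$ is the last $d-c+s$ columns of $\blockrow{p^{t-1}I}{G}T$; the second term is $\blockrow{p^t I}{pG}\pmb{u}$. Hence $\pmb{f}$ lies in the column span of $\blockrow{p^t I}{\blockrow{F}{pG}}$, completing the proof.

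The main obstacle is the bookkeeping around the Smith normal form over the polynomial ring $(D/pD)[X]$ (a PID, since $D/pD$ is a field): one must carefully justify that the rank-$c$ hypothesis on $\Abar$ transfers to $\blockrow{\Abar}{\Rbar}$ having rank $c$ — which holds because $\Abar$ is a submatrix of columns — and therefore that all $c$ invariant factors $\alphabar_i$ are nonzero, which is exactly what lets us conclude the first $c$ coordinates of $\overline{\pmb{w}}$ are forced to be $0$ rather than merely annihilated by a zero divisor. The rest is the standard "lift a solution of a linear system modulo $p^{t-1}$ to one modulo $p^t$'' argument, organized through the generator matrix $\blockrow{p^{t-1}I}{G}$.
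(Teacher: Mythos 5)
Your proof is correct and follows essentially the same route as the paper's: reduce membership in $\mccoymoduleshort{t}$ to a linear system over $(D/pD)[X]$ via the generator matrix $\blockrow{p^{t-1}I}{G}$, solve it with the Smith normal form using that full row rank of $\Abar$ forces all $\alphabar_i\neq 0$, and lift. The only difference is presentational: you verify the two inclusions separately (including a direct check that $AF\equiv 0\pmod{p^t}$), whereas the paper carries a single chain of equivalences.
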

\begin{proof}
  By hypothesis, an element $\pmb{f}\in D[X]^d$ is in $\mccoymoduleshort{t-1}$ if and only if there 
  exist $\pmb{h}\in D[X]^d$ and $\pmb{q}\in D[X]^s$ such that $\pmb{f}=p^{t-1}\pmb{h}+G\pmb{q}$. 

  Since $\mccoymoduleshort{t}\subseteq \mccoymoduleshort{t-1}$, it follows that  $\pmb{f}\in 
  \mccoymoduleshort{t}$ if and only if $\pmb{f}=p^{t-1}\pmb{h}+G\pmb{q}$ for some $\pmb{h}\in 
  D[X]^d$ and $\pmb{q}\in D[X]^s$ and   $p^{t-1}A\pmb{h}+AG\pmb{q}\equiv0\pmod{p^t}$. As the columns 
  of $G$ are elements of  $\mccoymoduleshort{t-1}$, the matrix $R$ is indeed an element of 
  $M_{c,s}(D[X])$ and  $\pmb{f}\in \mccoymoduleshort{t}$ holds if and only if
  \begin{equation}\label{eq:matrix-congruence}
    A\pmb{h}+R\pmb{q}\equiv 0\pmod{p}.
  \end{equation}
  Projecting into $(D/pD)[X]$, \eqref{eq:matrix-congruence} is equivalent to
  \begin{equation}\label{eq:matrix-equation}
    \blockrow{\Abar}{\Rbar}
    \blockcolumn{\hbar}{\qbar}=0.
  \end{equation}
  As $\Abar$ has full row rank, so do $\blockrow{\Abar}{\Rbar}$ and
  $\diag_{c\times (d+s)}(\alphabar_1,\ldots, \alphabar_c)$.

  Left-multiply \eqref{eq:matrix-equation} by $\Sbar$ to obtain
  \begin{equation*}
    \Sbar\blockrow{\Abar}{\Rbar} \Tbar\,\Tbar^{-1}\blockcolumn{\hbar}{\qbar} =\diag_{c\times 
(d+s)}(\alpha_1,\ldots, \alpha_c)\Tbar^{-1}\blockcolumn{\hbar}{\qbar}=0.
  \end{equation*}
  This is equivalent to
  \begin{equation*}
    \blockcolumn{\hbar}{\qbar}
    =\Tbar\blockcolumn{0}{\ybar}
  \end{equation*}
  for a suitable $\pmb{y}\in D[X]^{d-c+s}$.

  Thus $\pmb{h}=\pmb{h}_0+p \pmb{h}_1$ and $\pmb{q}=\pmb{q}_0+p\pmb{q}_1$ for
  some $\pmb{h}_1\in D[X]^d$, $\pmb{q}_1\in D[X]^s$ and
  \begin{equation*}
    \blockcolumn{\pmb{h}_0}{\pmb{q}_0}=T\blockcolumn{0}{\pmb{y}}.
  \end{equation*}
  Thus we have
  \begin{equation*}
    \pmb{f} =p^{t-1}\pmb{h} + G\pmb{q} =  \blockrow{p^{t-1}I}{G}T\blockcolumn{0}{\pmb{y}} +
    p^t\pmb{h}_1+Gp \pmb{q}_1 =  F\pmb{y} +
    p^t\pmb{h}_1+Gp \pmb{q}_1,
  \end{equation*}
  as claimed.
\end{proof}


\section{Generators of \texorpdfstring{$(p^t)$}{(p-power-t)}-ideals}\label{sec:some-generators}

This section is dedicated to the computation of a generating set of the $(p^t)$-ideal of 
a matrix over a principal ideal domain. Before we go into details, let us recall the 
basic definitions. 

\begin{definition}
Let $R$ be a commutative ring, $J$ an ideal and $B\in \Mn{R}$ be a square matrix. The 
\textit{$J$-ideal of $B$} is defined as
\begin{equation*}
  \nullideal{J}{B} =\nullideal[R]{J}{B} = \{\,f\in R[X] \mid f(B) \in \Mn{J}\,\}.
\end{equation*}
A monic polynomial $\nu \in R[X]$ is called \textit{$J$-minimal polynomial of $B$} if 
\begin{enumerate}
 \item $\nu\in \nullideal{J}{B}$ and
 \item $\deg(g) \geq \deg(\nu)$ for all monic polynomials $g$ with $g\in 
\nullideal{J}{B}$.
\end{enumerate}
We omit the superscript in $\nullideal[R]{J}{B}$ if the underlying ring is clear from the context.
\end{definition}

\begin{remark}
 The $(0)$-ideal of $B$ is just the null ideal of $B$, that is, $\nullideal{(0)}{B} = \{\,f\in R[X]\mid f(B)=0\,\}$. In case that $R$ is a field, 
$\nullideal{(0)}{B}$ is a principal ideal of $R[X]$. The \textit{minimal polynomial} of $B$ is the (in this case) uniquely determined $(0)$-minimal 
polynomial of $B$. Over general commutative rings, a $(0)$-minimal polynomial of a matrix is not
necessarily uniquely determined although its degree is.
\end{remark}

\begin{remark} 
 Note that every square matrix $B\in \Mn{R}$ has a $J$-minimal polynomial for every ideal 
$J$ of $R$. This is due to the Cayley-Hamilton theorem; every matrix over a 
commutative ring is a root of its own characteristic polynomial which is monic, 
cf.~\cite[Theorem~XIV.3.1]{Lang2002}. 
Let $B+\Mn{J}\in \Mn{R/J} $ be the residue class of $B$ modulo $J$ and $\chi\in (R/J)[X]$ denote  
the characteristic polynomial of $B+\Mn{J}$. Then every preimage $f\in R[X]$ of $\chi$ 
satisfies $f(B) \equiv \chi( B+\Mn{J}) \equiv 0 \pmod{\Mn{J}}$ and hence $f\in 
\nullideal{J}{B}$. In particular, there exists a monic preimage of $\chi$ in $R[X]$.
\end{remark}

From now on, let the underlying ring  be the principal ideal domain $D$ and $B\in \Mn{D}$ 
a square matrix. For any ideal $J$ of $D$ there exists $a\in D$ such that $J = (a)$. 
Following the convention in \cite{Rissner2016}, we write $\nullideal{a}{B}$ instead of 
$\nullideal{(a)}{B}$. 

Assume that $a=0$ and let $K$ denote the quotient field of $D$. The null ideal of $B$ considered 
as a matrix over $K$ is generated by its minimal polynomial $\mu_B\in K[X]$. Since the characteristic polynomial $\chi \in D[X]$ of $B$ is in 
$\nullideal[K]{0}{B}$ it follows that $\mu_B$ divides $\chi$. However, $D$ is
integrally closed and therefore every monic factor in $K[X]$ of a monic polynomial 
in $D[X]$ is already an element of $D[X]$ (see \cite[Ch.~5, §1.3, Prop.~11]{Bourbaki_CommAlg}). Hence $\mu_B \in D[X]$ and 
\begin{equation*}
\nullideal[D]{0}{B} = \nullideal[K]{0}{B} \cap D[X]  = \mu_B K[X]\cap D[X] = \mu_B D[X].
\end{equation*}

In order to find a generating set of $\nullideal{p^t}{B}$, we reformulate the problem in a 
form to which the approach of the previous section is applicable. For this purpose we use one
of McCoy's theorems.

\begin{lemma}[{\cite[Theorem~54]{McCoy1948}}]\label{lemma:mccoy}
 Let $R$ be a commutative ring and $C \in \Mn{R}$ a square matrix. Then 
\begin{equation*}
  \nullideal{0}{C}=\{ f\in R[X] \mid \exists Q\in\Mn{R[X]}\colon  \adj(X-C)f(X) = Q(X)\chi_C(X) \}.
\end{equation*}
Here, $\adj(X-C)\in\Mn{R[X]}$ is the adjugate (or classical adjoint) matrix of $X-C$ and $\chi_C\in 
R[X]$ 
denotes the characteristic polynomial of $C$.
\end{lemma}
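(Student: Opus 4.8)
The plan is to prove the two inclusions separately, using throughout the fundamental identity $\adj(X-C)\,(X-C) = \chi_C(X)\,I$ in $\Mn{R[X]}$, which holds over any commutative ring. For the inclusion ``$\subseteq$'': suppose $f\in\nullideal{0}{C}$, i.e.\ $f(C)=0$. First I would write $f(X)I - f(C)I = (X-C)\,P(X)$ for some $P\in\Mn{R[X]}$; this is the standard ``matrix factor theorem'' obtained by expanding $f(X)-f(C) = \sum_k a_k(X^k - C^k)$ and using $X^k - C^k = (X-C)(X^{k-1}+X^{k-2}C+\cdots+C^{k-1})$, noting that $X$ (a scalar) commutes with $C$. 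Since $f(C)=0$ this reads $f(X)I = (X-C)P(X)$. Left-multiplying by $\adj(X-C)$ and using the fundamental identity gives $\adj(X-C)f(X) = \chi_C(X)P(X)$, so we may take $Q = P$. (Alternatively, since $\adj(X-C)$ and $X-C$ commute, one multiplies on the right by $P(X)$ after writing $\adj(X-C)(X-C)=\chi_C I$, yielding the same conclusion.)

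For the inclusion ``$\supseteq$'': suppose $\adj(X-C)f(X) = Q(X)\chi_C(X)$ for some $Q\in\Mn{R[X]}$. I would multiply both sides on the left by $(X-C)$ and apply the fundamental identity to the left-hand side, obtaining $\chi_C(X)f(X)I = (X-C)Q(X)\chi_C(X)$. Since $R[X]$ is an integral... — careful, $R$ need not be a domain. Instead, observe that $\chi_C$ is monic, hence a non-zero-divisor in the (commutative) ring $\Mn{R[X]}$ acting by scalar multiplication: if $\chi_C(X)M(X)=0$ for $M\in\Mn{R[X]}$ then comparing leading coefficients forces $M=0$. Therefore from $\chi_C(X)\bigl(f(X)I - (X-C)Q(X)\bigr)=0$ we may cancel $\chi_C$ to get $f(X)I = (X-C)Q(X)$. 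Now substitute $X=C$: the right-hand side becomes $(C-C)Q(C)=0$ (this substitution is legitimate because it is a ring homomorphism $R[X]\to\Mn{R}$ only when everything in sight commutes, which it does here since all matrices involved are polynomials in the single commuting pair $X,C$), hence $f(C)I=0$, i.e.\ $f(C)=0$ and $f\in\nullideal{0}{C}$.

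The main obstacle, and the point requiring the most care, is handling the substitution/evaluation steps rigorously over an arbitrary commutative ring rather than a field: the map $g(X)\mapsto g(C)$ extended entrywise to matrices is not a ring homomorphism on all of $\Mn{R[X]}$, so one must check that in each application the relevant matrices lie in the commutative subring $R[C][X]$ generated by $C$ and $X$, where evaluation at $X=C$ does behave multiplicatively. Equivalently, one can avoid evaluation entirely on the ``$\supseteq$'' side by the cancellation-of-$\chi_C$ argument above and then deduce $f(C)=0$ by evaluating the identity $f(X)I=(X-C)Q(X)$ at $X=C$ — which is safe because $X-C$ and $Q(X)$ are polynomials in the commuting indeterminate $X$ with matrix coefficients that all commute with $C$. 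Once this bookkeeping is in place, both inclusions are short and the lemma follows.
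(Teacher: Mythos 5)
Your proof is correct in substance and takes essentially the same route as the paper: the factor theorem for $X-C$ in $\Mn{R[X]}\simeq \Mn{R}[X]$, the identity $\adj(X-C)(X-C)=\chi_C(X)I$, and cancellation of the monic non-zero-divisors $X-C$ and $\chi_C$. The one soft spot is your justification of substituting $X=C$ in the reverse inclusion: the coefficients of $Q$ are not a priori polynomials in $C$ (the hypothesis only provides some $Q\in\Mn{R[X]}$), so the claim that ``everything in sight commutes'' needs an argument — for instance, multiply $f(X)I=(X-C)Q(X)$ on the right by $X-C$ and cancel the non-zero-divisor $X-C$ on the left to get $Q(X)(X-C)=(X-C)Q(X)$, so the coefficients of $Q$ do commute with $C$; or, simpler, note that the left evaluation at $C$ of any product $(X-C)Q(X)$ vanishes by a direct telescoping computation, with no commutation hypothesis at all. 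The paper sidesteps this point entirely by never evaluating at $C$ a second time: it keeps the condition in the form $f(X)\in\Mn{R}[X](X-C)$ and runs a chain of equivalences using only non-zero-divisor cancellation.
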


Since this result is central to our work, we restate its proof here for the reader's convenience.

\begin{proof}
We embed $R[X]$ in $\Mn{R[X]}\simeq\Mn{R}[X]$ via $f \longmapsto f(X)I$ where $I$ is the 
identity matrix and identify $f\in 
R[X]$ with its image. A straight-forward verification shows that $f(C) = 0$ if and 
only if $f(X) \in \Mn{R}[X](X-C)$. 

Being a monic polynomial, $X-C$ is not a zero-divisor in $\Mn{R}[X]$ and therefore
\begin{equation*}
\adj(X-C)f(X) \in \Mn{R}[X]\chi_C(X) 
\end{equation*}
if and only if 
\begin{equation*}
\chi_C(X)f(X)=\adj(X-C)(X-C)f(X) \in \Mn{R}[X]\chi_C(X)(X-C) 
\end{equation*}
which is, in turn, equivalent to 
\begin{equation*}
f(X) \in \Mn{R}[X](X-C) 
\end{equation*}
since $\chi_C(X)$ is also not a zero-divisor in $\Mn{R}[X]$. 
\end{proof}

If $B+\Mn{p^tD}\in \Mn{D/p^tD}$ denotes the residue class of $B$ modulo $p^t$, then 
$\nullideal{p^t}{B}$ is the preimage of $\nullideal{0}{B+\Mn{p^tD}}$ under the 
projection modulo $p^t$. Hence we can write the $(p^t)$-ideal of our matrix $B\in \Mn{D}$ 
in the following way.

\begin{corollary}\label{cor:mccoy-form}
Let $D$ be a principal ideal domain, $p\in D$ a prime element, $B\in \Mn{D}$ be a square 
matrix and $t\ge 0$. Then
\begin{equation*}
  \nullideal{p^t}{B}=\{ f\in D[X] \mid \exists Q\in\Mn{D[X]}\colon  \adj(X-B)f(X) \equiv 
   Q(X)\chi_B(X) \pmod{p^t}\}.
\end{equation*}
\end{corollary}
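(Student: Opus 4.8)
The plan is to derive Corollary~\ref{cor:mccoy-form} as a direct consequence of Lemma~\ref{lemma:mccoy} by transporting the characterization of null ideals across the reduction homomorphism $\pi\colon D[X]\to (D/p^tD)[X]$ induced by the projection $D\to D/p^tD$. The starting observation is that, by definition, a polynomial $f\in D[X]$ lies in $\nullideal{p^t}{B}$ if and only if $f(B)\in\Mn{p^tD}$, which is exactly the statement that $(f \bmod p^t)$ evaluated at $B+\Mn{p^tD}$ is the zero matrix over $D/p^tD$; equivalently, $\pi(f)\in\nullideal{0}{B+\Mn{p^tD}}$. So the first step is to record that $\nullideal{p^t}{B}=\pi^{-1}\bigl(\nullideal{0}{B+\Mn{p^tD}}\bigr)$, as already noted in the paragraph preceding the corollary.

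Next I would apply Lemma~\ref{lemma:mccoy} with $R=D/p^tD$ and $C=B+\Mn{p^tD}$. This gives that $\pi(f)\in\nullideal{0}{C}$ if and only if there exists $\widetilde Q\in\Mn{(D/p^tD)[X]}$ with $\adj(X-C)\,\pi(f)(X)=\widetilde Q(X)\,\chi_C(X)$ in $\Mn{(D/p^tD)[X]}$. The two remaining points to check are compatibility of the constructions $\adj(X-\cdot)$ and $\chi_{\cdot}$ with reduction modulo $p^t$, and surjectivity of the induced map on matrix polynomial rings. For the former: the entries of $\adj(X-B)$ are (signed) $(n-1)\times(n-1)$ minors of $X-B$, hence polynomials in $D[X]$, and reduction modulo $p^t$ is a ring homomorphism, so $\adj(X-C)=\overline{\adj(X-B)}$, the entrywise reduction; likewise $\chi_C=\overline{\chi_B}$ since the characteristic polynomial is a polynomial expression in the entries. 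For surjectivity: the reduction map $\Mn{D[X]}\to\Mn{(D/p^tD)[X]}$ is onto, because $D[X]\to (D/p^tD)[X]$ is onto and one may reduce entry by entry; hence any $\widetilde Q$ as above lifts to some $Q\in\Mn{D[X]}$.

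Putting these together: $f\in\nullideal{p^t}{B}$ iff there is $\widetilde Q\in\Mn{(D/p^tD)[X]}$ with $\overline{\adj(X-B)}\,\overline{f}=\widetilde Q\,\overline{\chi_B}$, iff (lifting $\widetilde Q$ to $Q$, and noting the equation over $D/p^tD$ is precisely a congruence modulo $p^t$ of the corresponding equation over $D$) there exists $Q\in\Mn{D[X]}$ with $\adj(X-B)\,f(X)\equiv Q(X)\,\chi_B(X)\pmod{p^t}$. The only direction that needs a word of care is going from the existence of a lift $Q$ back to the congruence: one reduces the lifted identity modulo $p^t$ and uses that reduction is a homomorphism, so no genuine obstruction arises there.

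I expect essentially no serious obstacle: the argument is a formal ``change of base ring'' together with the surjectivity of the reduction map on matrices, and Lemma~\ref{lemma:mccoy} does all the real work. The one spot deserving attention — and the place where one should be explicit rather than hand-wavy — is the identification of the matrix equation over $D/p^tD$ with a congruence modulo $p^t$ over $D$; this hinges on $\Mn{D[X]}\to\Mn{(D/p^tD)[X]}$ being surjective with kernel $\Mn{p^tD[X]}$, which is immediate from the corresponding facts for $D[X]\to (D/p^tD)[X]$. Everything else (compatibility of $\adj$ and $\chi$ with reduction) is routine because both are given by polynomial formulas in the matrix entries.
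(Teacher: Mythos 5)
Your proof is correct and takes essentially the same route as the paper, which presents the corollary as an immediate pullback of Lemma~\ref{lemma:mccoy} applied to $R=D/p^tD$ and $C=B+\Mn{p^tD}$, using exactly the observation that $\nullideal{p^t}{B}$ is the preimage of $\nullideal{0}{B+\Mn{p^tD}}$ under reduction and that $\adj(X-\cdot)$, $\chi_{\cdot}$ and lifting of $Q$ behave well under the surjection $\Mn{D[X]}\to\Mn{(D/p^tD)[X]}$. The one point you omit, which the paper flags explicitly, is $t=0$: there $D/p^tD$ is the zero ring, which has no unity under the paper's conventions, so Lemma~\ref{lemma:mccoy} does not literally apply; that case is trivial, however, since both sides of the asserted equality are all of $D[X]$ (every congruence modulo $p^0$ holds).
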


Note that if $t=0$, then $(p^0) = D$ and $D/D$ is the zero ring 
which has no unity and we cannot apply McCoy's theorem (Lemma~\ref{lemma:mccoy}). However, it is 
easily seen that the corollary still holds which is why we allow the case 
$t= 0$.

For $1\le i, j\le n$, the congruence in Corollary~\ref{cor:mccoy-form} translates to
\begin{equation*}
  \adj(X-B)_{ij} f-\chi_B Q_{ij}\equiv 0\pmod{p^t}.
\end{equation*}

\begin{proposition}\label{proposition:M-N-generators}
Let $D$ be a principal ideal domain, $p\in D$ a prime element, $B\in \Mn{D}$ be a square 
matrix with characteristic polynomial $\chi_B$ and $t\ge 0$. Further, let $\pmb{b}\in 
D[X]^{n^2}$  be the column vector 
containing all entries of $\adj(X-B)$ in some fixed order,
$A=\blockrow{\pmb{b}}{-\chi_BI} \in M_{n^2,n^2+1}(D)$ where  $I$ denotes the $n^2 \times 
n^2$-identity matrix and 
\begin{equation*}
 \mccoymodule{t}{A} = \{ \pmb{f}\in D[X]^{n^2+1} \mid  A\pmb{f} \equiv 0 \pmod{p^t}\}.
\end{equation*}

For $g_{11}$, \ldots, $g_{1s}\in D[X]$ (with $s\in \N_0)$, the following assertions are equivalent:
\begin{enumerate}
 \item There is a matrix $G\in M_{n^2+1,s}(D[X])$ with first row $g_{11}$, \ldots,
   $g_{1s}$ such that the columns of $\blockrow{p^{t}I}{G}$ form a generating set of $\mccoymodule{t}{A}$.
 \item $p^t$, $g_{11}$, $\ldots$, $g_{1s}$ form a generating set of $\nullideal{p^t}{B}$.
\end{enumerate}
\end{proposition}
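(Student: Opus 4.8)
The plan is to establish the equivalence by showing that the two generating-set conditions both encode, via Corollary~\ref{cor:mccoy-form}, the same decomposition of an arbitrary $f\in\nullideal{p^t}{B}$. First I would unwind the definition of $\mccoymodule{t}{A}$ in terms of the block structure of $A$: a vector $\pmb{f}=(f, Q_{11},\ldots,Q_{nn})^{\mathsf T}\in D[X]^{n^2+1}$ lies in $\mccoymodule{t}{A}$ exactly when $\pmb{b}f-\chi_B(Q_{11},\ldots,Q_{nn})^{\mathsf T}\equiv 0\pmod{p^t}$, i.e.\ when $\adj(X-B)_{ij}f\equiv \chi_B Q_{ij}\pmod{p^t}$ for all $i,j$. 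By Corollary~\ref{cor:mccoy-form} this says precisely that the \emph{first coordinate} $f$ of $\pmb{f}$ lies in $\nullideal{p^t}{B}$, with the remaining coordinates serving as a witness $Q$. So the map $\pmb{f}\mapsto(\text{first coordinate of }\pmb{f})$ sends $\mccoymodule{t}{A}$ \emph{onto} $\nullideal{p^t}{B}$, and conversely every $f\in\nullideal{p^t}{B}$ is the first coordinate of at least one element of $\mccoymodule{t}{A}$.

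Next I would push both directions through this surjection. Assume (1): the columns of $\blockrow{p^tI}{G}$ generate $\mccoymodule{t}{A}$, and the first row of $G$ is $g_{11},\ldots,g_{1s}$. The first rows of the columns of $\blockrow{p^tI}{G}$ are $p^t,0,\ldots,0$ (from $p^tI$) together with $g_{11},\ldots,g_{1s}$ (from $G$). Since taking first coordinates is a $D[X]$-module homomorphism and is surjective onto $\nullideal{p^t}{B}$, the images of a generating set generate the image; hence $p^t,g_{11},\ldots,g_{1s}$ generate $\nullideal{p^t}{B}$, which is (2). For the converse, assume (2). For each $g_{1j}$ pick, using Corollary~\ref{cor:mccoy-form}, a witness $Q^{(j)}\in\Mn{D[X]}$ with $\adj(X-B)g_{1j}\equiv Q^{(j)}\chi_B\pmod{p^t}$, and let the $j$th column of $G$ be $g_{1j}$ stacked on top of the entries of $Q^{(j)}$ in the chosen order; by construction each such column lies in $\mccoymodule{t}{A}$, and the columns of $p^tI$ lie in $\mccoymodule{t}{A}$ as well (their image $p^t\cdot e_1$-type vectors have first coordinate $p^t\in\nullideal{p^t}{B}$, or more directly $A(p^te_k)\equiv0$). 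It remains to check these columns generate \emph{all} of $\mccoymodule{t}{A}$: given $\pmb{f}=(f,Q)^{\mathsf T}\in\mccoymodule{t}{A}$, write $f=\sum_j h_j g_{1j}+h_0 p^t$ in $\nullideal{p^t}{B}$; then $\pmb{f}-\sum_j h_j(\text{column }j\text{ of }G)-h_0(\text{first column of }p^tI)$ has first coordinate $0$, so it equals $(0,Q')^{\mathsf T}$ with $\chi_B Q'\equiv 0\pmod{p^t}$, and since $\chi_B$ is monic this forces $Q'\equiv 0\pmod{p^t}$, i.e.\ $(0,Q')^{\mathsf T}$ is a $D[X]$-combination of the remaining columns $p^te_2,\ldots,p^te_{n^2+1}$ of $p^tI$. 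This gives (1).

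The main obstacle I anticipate is the final generation step in the $(2)\Rightarrow(1)$ direction: showing that once the first coordinate has been cleared, the leftover vector $(0,Q')$ is actually killed enough by the $p^tI$-block. The key fact making this work is that $\chi_B$ is \emph{monic}, so $\chi_B Q'\equiv 0\pmod{p^t}$ in $D[X]$ implies $Q'\equiv 0\pmod{p^t}$ coordinatewise (comparing coefficients, leading term first, using that $D$ is an integral domain and $p^t$ generates an ideal closed under the relevant cancellations). Without monicity of $\chi_B$ this deduction would fail and the proposed generating set would be incomplete. A secondary, purely bookkeeping point is to fix once and for all the same ordering of the $n^2$ entries of $\adj(X-B)$ (hence of $\pmb{b}$) and of the entries of each $Q^{(j)}$, so that "stacking $g_{1j}$ on top of the entries of $Q^{(j)}$" is literally the $j$th column of a matrix $G$ whose first row is $g_{11},\ldots,g_{1s}$; I would state this ordering convention explicitly at the start of the proof to keep the two directions symmetric.
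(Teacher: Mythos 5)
Your proposal is correct and follows essentially the same route as the paper's proof: both directions use the first-coordinate projection of $\mccoymodule{t}{A}$ onto $\nullideal{p^t}{B}$, lift the generators $g_{1j}$ to columns via witnesses from Corollary~\ref{cor:mccoy-form}, and exploit that the monic $\chi_B$ is a non-zero-divisor modulo $p^t$ to cancel it. Your variant of clearing the first coordinate and handling the leftover $(0,Q')$ via the columns of $p^tI$ is just a repackaging of the paper's coordinatewise cancellation $\chi_B f_{j+1}\equiv\sum_i h_i\chi_B g_{j+1,i}\pmod{p^t}$.
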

\begin{proof}
The ideal $\nullideal{p^t}{B}$ is the projection of $\mccoymodule{t}{A}$ on the first component 
according to Corollary~\ref{cor:mccoy-form}. It follows that $(1)$ implies $(2)$. For the reverse 
implication let $g_{11}$, $\ldots$, $g_{1s}\in D[X]$ such that $\nullideal{p^t}{B} = 
(p^t,g_{11},\ldots,g_{1s})$.
Since $g_{1i}\in \nullideal{p^t}{G}$ for $1\le i \le s$, there exist polynomials $g_{ji} 
\in D[X]$ for $2\le j \le n^2+1$ such that $\pmb{g}_i = (g_{ji})_j \in \mccoymodule{t}{A}$ according 
to Corollary~\ref{cor:mccoy-form}. If $\pmb{f} = (f_j)_j\in \mccoymodule{t}{A}$ is an arbitrary 
element, then $f_1\in \nullideal{p^t}{A}$ and there exist $h_0$, $\ldots$, $h_s$ such that 
\begin{equation}\label{eq:rep-f1}
f_1 = p^th_0 + \sum_{i=1}^s h_ig_{1i}.
\end{equation}
If $b_j$ denotes the $j$-th coordinate of $\pmb{b}$ then, by definition of $\mccoymodule{t}{A}$, 
\begin{equation*}
 f_1 b_j \equiv \chi_B f_{j+1} \pmod{p^t}
\end{equation*}
and 
\begin{equation*}
 g_{1i} b_j \equiv \chi_B g_{j+1,i} \pmod{p^t}
\end{equation*}
hold for $1 \le j\le n^2$. Together with \eqref{eq:rep-f1}, we get
\begin{equation*}
\chi_B f_{j+1} \equiv f_1 b_j \equiv \sum_{i=1}^s h_ig_{1i}b_j 
\equiv \sum_{i=1}^s h_i \chi_B g_{j+1,i} \pmod{p^t}.
\end{equation*}
Since $\chi_B$ is monic, its residue class modulo $p^t$ is no zero-divisor and we can 
cancel $\chi_B$ in the equation above to conclude that 
\begin{equation*}
f_{j+1} \equiv \sum_{i=1}^s h_i g_{j+1,i} \pmod{p^t}.
\end{equation*}
Therefore $\pmb{f}$ is a $D[X]$-linear combination of the columns of 
$\blockrow{p^{t}I}{G}$ where $G=(g_{ji})_{\substack{1\le j\le n^2+1\\1\le i\le s}}$.
\end{proof}

It follows from Proposition~\ref{proposition:M-N-generators} that we can use 
Algorithm~\ref{algorithm:lifting} to compute a generating system of $\mccoymodule{t}{A}$ (with 
$c=n^2$ and $d=n^2+1$). Note that the residue class $\overline{A}$ of $A$ has
full rank
since $\chi_B$ is monic. Assume that we are given $G\in M_{n^2+1,s}(D[X])$ whose columns 
together with those of $p^{t-1}I$ generate $\mccoymodule{t-1}{A}$, then 
Algorithm~\ref{algorithm:lifting} computes a matrix $F\in 
M_{n^2+1,s+1}(D[X])$ such that the columns of $\begin{pmatrix}{p^{t}I}&{F}&pG\end{pmatrix}$ 
generate $\mccoymodule{t}{A}$. Then $p^t$, $f_{11}$, $\ldots$, $f_{1,s+1}$, $pg_{11}$,
$\ldots$, $pg_{1s}$ generate 
$\nullideal{p^t}{B}$, according to Proposition~\ref{proposition:M-N-generators}. In particular, the 
elements $f_{11}$, $\ldots$, $f_{1,s+1}$ satisfy a property which motivates the next definition.

\begin{definition}\label{def:pt-generating-property}
Let $D$ be a principal ideal domain, $p\in D$ a prime element, $B\in \Mn{D}$ be a square 
matrix and $t\ge 1$. We say that a finite subset $\mathcal{F}$ of $D[X]$ has the 
\textit{$(p^t)$-generating property w.r.t.~$B$} if 
  \begin{equation*}
   \nullideal{p^{t}}{B} = (\mathcal{F}) + p\nullideal{p^{t-1}}{B}.
  \end{equation*}
\end{definition}

As output of Algorithm~\ref{algorithm:lifting},  $\mathcal{F}=\{f_{11}, \ldots,
f_{1,s+1}\}$ has $s+1$ elements. 
Applying the algorithm recursively leads to a huge set of generators.

Indeed, according to \cite[Proposition~2.13]{Rissner2016}, if $t\ge 1$ and $\nu_t$ is a 
$(p^t)$-minimal polynomial of $B$, then
\begin{equation}\label{eq:recursive-description}
 \nullideal{p^t}{B} = (\nu_t) + p\nullideal{p^{t-1}}{B}.
\end{equation}
This is also a consequence of Algorithm~\ref{algorithm:everything} below.

The next section is dedicated to the question how to compute $(p^t)$-minimal polynomials 
given a set $\mathcal{F}$ with the $(p^t)$-generating property and a $(p^{t-1})$-minimal 
polynomial. For now, we assume that we already know $(p^i)$-minimal polynomials $\nu_i$ 
for $1\le i\le t-1$. Then Equation~\eqref{eq:recursive-description} implies
\begin{equation}\label{eq:nr-of-generators}
 \nullideal{p^{t-1}}{B} = \sum_{i=0}^{t-1} p^{t-1-i}\nu_{i}D[X] 
\end{equation}
and according to Proposition~\ref{proposition:M-N-generators}, there exists a matrix $G\in 
M_{n^2+1,t-1}(D)$ with $g_{1i} = p^{t-1-i}\nu_{i}$ for $1\le i \le t-1$ such that 
$\blockrow{p^{t-1}I}{G}$ generates $\mccoymodule{t-1}{A}$. With this choice, $s = t-1$ 
and hence Algorithm~\ref{algorithm:lifting} produces a set $\mathcal{F}$ with $t$ elements.

Note that \cite[Theorem~2.19]{Rissner2016} states that it suffices to sum over the $(t-1)$-st index 
set in Equation~\eqref{eq:nr-of-generators} which may result in a smaller number of columns of $G$. 
However, even this reduction technique does not yield $\abs{\calF}=1$ except in
trivial cases. Therefore, reduction of $\abs{\calF}$ in every step is essential.

\section{Computing \texorpdfstring{$(p^t)$}{(p-power-t)}-minimal polynomials}\label{sec:computing-minimal-polynomials}

This section considers the question how to compute a $(p^t)$-minimal polynomial of a 
square matrix $B\in \Mn{D}$ over a principal ideal domain $D$ for $t\ge 1$. For this purpose, we assume throughout this 
section that we already determined a $(p^i)$-minimal polynomial $\nu_i$ for $0 \le i \le 
t-1$ and a set $\mathcal{F}$ with the $(p^t)$-generating property. 
We start with a special case, namely the case where the set $\mathcal{F}$  consists of a 
single monic polynomial $f$.

\begin{proposition}\label{proposition:singletons-with-generating-property}
Let $D$ be a principal ideal domain, $p\in D$ a prime element, $B\in \Mn{D}$ be a square 
matrix, $t\ge 1$ and $\nu \in D[X]$ be a monic polynomial.

If $\nullideal{p^t}{B} = (\nu) + p\nullideal{p^{t-1}}{B}$ then $\nu$ is a $(p^t)$-minimal polynomial.
\end{proposition}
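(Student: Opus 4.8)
The plan is to show that $\nu$ is a monic polynomial in $\nullideal{p^t}{B}$ of minimal degree among all monic polynomials in that ideal. That $\nu$ lies in $\nullideal{p^t}{B}$ is immediate from the hypothesis $\nullideal{p^t}{B} = (\nu) + p\nullideal{p^{t-1}}{B}$, so the substance is the minimality of $\deg\nu$. Suppose toward a contradiction that $g$ is a monic polynomial in $\nullideal{p^t}{B}$ with $\deg g < \deg\nu$. Using the hypothesis, I would write $g = \nu q + p r$ with $q\in D[X]$ and $r \in \nullideal{p^{t-1}}{B}$. Since $\deg g < \deg\nu$ and $\nu$ is monic, comparing degrees forces $q = 0$ (otherwise $\deg(\nu q)\ge\deg\nu>\deg g$, and no cancellation can occur against the $pr$ term that would let the right-hand side drop to degree $\deg g$ while still equalling the monic $g$; I should be slightly careful here and argue this comparison cleanly, possibly by looking at the leading coefficient of $g$).

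Once $q=0$ we have $g = pr$ with $r\in \nullideal{p^{t-1}}{B}$. But $g$ is monic, so its leading coefficient is $1$, which would have to equal $p$ times the leading coefficient of $r$ — impossible in the domain $D$, since $p$ is a prime element and hence a nonunit, so $1\notin pD$. This contradiction shows no such $g$ exists, i.e. every monic polynomial in $\nullideal{p^t}{B}$ has degree at least $\deg\nu$, which together with $\nu\in\nullideal{p^t}{B}$ and $\nu$ monic is exactly the definition of $\nu$ being a $(p^t)$-minimal polynomial.

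The one place needing care — and the closest thing to an obstacle — is justifying that in the decomposition $g = \nu q + pr$ we really can take $q$ and $r$ with the degree of $\nu q$ not secretly cancelling against $pr$. The clean way: first reduce $g$ modulo $\nu$ by polynomial division, $g = \nu q' + g'$ with $\deg g' < \deg\nu$ (possible since $\nu$ is monic, so division works over $D[X]$); then $g' = g - \nu q' \in \nullideal{p^t}{B}$ because $\nu\in\nullideal{p^t}{B}$; writing $g' \in (\nu) + p\nullideal{p^{t-1}}{B}$ and using $\deg g' < \deg\nu$ together with monicity of $\nu$ shows the $(\nu)$-part of $g'$ must be zero, hence $g'\in p\nullideal{p^{t-1}}{B}$. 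Now if $\deg g < \deg\nu$ then $g = g'\in p\nullideal{p^{t-1}}{B}$, contradicting that $g$ is monic as above; and this argument in fact shows more, namely the recursive description $\nullideal{p^t}{B} = (\nu) + p\nullideal{p^{t-1}}{B}$ forces every polynomial of degree $<\deg\nu$ in the ideal to lie in $p\nullideal{p^{t-1}}{B}$, which is the conceptual reason minimality holds.
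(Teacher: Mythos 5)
Your proof is correct and takes essentially the same route as the paper: write a monic $f\in\nullideal{p^t}{B}$ as $q\nu+ph$ with $h\in\nullideal{p^{t-1}}{B}$ and use that $p\nmid\lc(f)=1$. The one step to tighten is exactly the one you flag: since the decomposition is not unique, what is forced is not $q=0$ but $q\in pD[X]$ --- split $q=q_1+pq_2$ with no nonzero coefficient of $q_1$ divisible by $p$ (as the paper does); if $q_1\ne 0$ then $\lc(q_1\nu)=\lc(q_1)$ is not divisible by $p$, so no cancellation with the $p(\cdot)$ term can lower the degree and $\deg f=\deg q_1+\deg\nu\ge\deg\nu$, while if $q_1=0$ then $f\in pD[X]$, contradicting monicity just as you conclude.
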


\begin{proof}
 Since $\nu\in\nullideal{p^t}{B}$ holds by hypothesis, it suffices to show that $\deg(f) 
\ge \deg(\nu)$ for all monic polynomials $f \in \nullideal{p^t}{B}$. By assumption, 
 \begin{equation*}
  \nullideal{p^t}{B} = (\nu) + p\nullideal{p^{t-1}}{B}
 \end{equation*}
holds, so for a monic polynomial $f\in \nullideal{p^t}{B}$ there exist 
$g\in D[X]$ and $h\in \nullideal{p^{t-1}}{B}$ such that $f = g\nu + ph$. Let
$g_1$, $g_2\in 
D[X]$ be polynomials such that $g = g_1 + pg_2$ and no non-zero coefficient of $g_1$ is 
divisible by $p$. Then $f = g_1\nu + p(g_2\nu + h)$ and since $f$ is monic and $p$ does 
not divide $\lc(g_1) = \lc(g_1 \nu)$ it follows that $\deg(f) = \deg(g_1)+\deg(\nu) \ge 
\deg(\nu)$. 
\end{proof}

In order to apply Proposition~\ref{proposition:singletons-with-generating-property}, we
have to reduce the output set $\mathcal{F}$ of
Algorithm~\ref{algorithm:lifting}
such that it only contains one monic polynomial.

As a first step, observe that
\begin{equation*}
\nullideal{p^t}{B} \cap pD[X] = p\nullideal{p^{t-1}}{B}
\end{equation*}
holds and therefore $\mathcal{F} \setminus p D[X]$ has the $(p^t)$-generating property. 
From now on we can therefore assume that $\mathcal{F} \cap p D[X] = \emptyset$. Since 
$\nullideal{p^t}{B}$ always contains a monic polynomial it follows that 
$\nullideal{p^t}{B} \ne p\nullideal{p^{t-1}}{B}$ and hence $\mathcal{F}$ is never empty.

In order to compute a $(p^t)$-minimal polynomial from the polynomials in $\mathcal{F}$ we 
need the following special case of 
\cite[Corollary~2.14]{Rissner2016}.
\begin{lemma}[{\cite{Rissner2016}}]\label{lemma:degree-lower-bound}
  Let $\nu_{t}$ be a $(p^{t})$-minimal polynomial of $B$ and $f\in
  \nullideal{p^t}{B}$.

  If $f\notin pD[X]$, then $\deg(f) \ge \deg(\nu_{t})$.
\end{lemma}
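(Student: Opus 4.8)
The plan is to deduce this from the recursive description of $\nullideal{p^t}{B}$ together with Proposition~\ref{proposition:singletons-with-generating-property}, or rather its proof. First I would invoke Equation~\eqref{eq:recursive-description}, which gives $\nullideal{p^t}{B} = (\nu_t) + p\nullideal{p^{t-1}}{B}$. Thus for the given $f\in\nullideal{p^t}{B}$ there are $g\in D[X]$ and $h\in\nullideal{p^{t-1}}{B}$ with $f = g\nu_t + ph$. Write $g = g_1 + pg_2$ where no non-zero coefficient of $g_1$ is divisible by $p$, so that $f = g_1\nu_t + p(g_2\nu_t + h)$.

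Now the key observation is the hypothesis $f\notin pD[X]$: this forces $g_1\neq 0$. Indeed, if $g_1 = 0$ then $f = p(g_2\nu_t + h)\in pD[X]$, a contradiction. Since $\nu_t$ is monic and $\lc(g_1)$ is not divisible by $p$, the product $g_1\nu_t$ has leading coefficient $\lc(g_1)$, which is not divisible by $p$; hence this term cannot be cancelled by the $p(g_2\nu_t+h)$ summand in any degree $\ge \deg(g_1\nu_t)$. Therefore $\deg(f) = \deg(g_1) + \deg(\nu_t) \ge \deg(\nu_t)$, which is the claim. This argument is essentially identical to the one in the proof of Proposition~\ref{proposition:singletons-with-generating-property}, with the roles adapted: there one concludes $\deg(f)\ge\deg(\nu)$ for a \emph{monic} $f$ using that $\lc(g_1)$ is a unit times the leading coefficient; here the weaker hypothesis $f\notin pD[X]$ still suffices to guarantee $g_1\neq 0$.

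I do not expect a genuine obstacle here, since all the needed ingredients are already available: Equation~\eqref{eq:recursive-description} is cited from \cite[Proposition~2.13]{Rissner2016} (and the excerpt notes it also follows from the yet-to-come Algorithm~\ref{algorithm:everything}), and the leading-coefficient bookkeeping is routine. The only point requiring a little care is making sure the $p$-adic splitting $g = g_1 + pg_2$ is legitimate over the principal ideal domain $D$, i.e.\ that each coefficient of $g$ can be written uniquely modulo $p$; this is immediate because $D/pD$ is a field and one simply lifts coefficients. The subtlety to flag is purely one of hypotheses: one must resist the temptation to assume $f$ is monic — the statement is strictly more general, and it is exactly this generality (applying to any $f\notin pD[X]$) that makes the lemma useful for bounding degrees of the polynomials produced by the lifting algorithm.
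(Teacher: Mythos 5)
Your argument is correct, but it is worth noting that the paper itself gives no proof of this lemma at all: it is imported verbatim as a special case of \cite[Corollary~2.14]{Rissner2016}. You instead derive it from the recursive description \eqref{eq:recursive-description}, i.e.\ from $\nullideal{p^t}{B}=(\nu_t)+p\nullideal{p^{t-1}}{B}$, which in this paper is likewise only cited (from \cite[Proposition~2.13]{Rissner2016}). So you have traded one external citation for another of comparable strength plus a short correct computation, which is exactly the leading-coefficient bookkeeping from the proof of Proposition~\ref{proposition:singletons-with-generating-property}, run under the weaker hypothesis $f\notin pD[X]$ instead of monicity; this makes the exposition slightly more self-contained but does not eliminate the dependence on the reference. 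Two small caveats. First, within the logical structure of this paper you must take \eqref{eq:recursive-description} from the external source and \emph{not} via the remark that it ``is also a consequence of Algorithm~\ref{algorithm:everything}'': the correctness of that algorithm rests on Algorithms~\ref{algorithm:replace-by-monic-polynomial} and~\ref{algorithm:computenu}, whose proofs (Lemma~\ref{lemma:find-monic-polynomial} in particular) use precisely the lemma you are proving, so that route would be circular; you mention the citation, but the circularity danger deserves an explicit flag. Second, your asserted equality $\deg(f)=\deg(g_1)+\deg(\nu_t)$ need not hold, since $p(g_2\nu_t+h)$ may have larger degree than $g_1\nu_t$; what the argument actually gives is that the coefficient of $X^{\deg(g_1)+\deg(\nu_t)}$ in $f$ is congruent to $\lc(g_1)\not\equiv 0 \pmod p$, hence $\deg(f)\ge\deg(g_1)+\deg(\nu_t)\ge\deg(\nu_t)$, which is all you need (the paper's proof of Proposition~\ref{proposition:singletons-with-generating-property} contains the same cosmetic imprecision).
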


The idea is to start with $\mathcal{F}_0 = \mathcal{F}$ and show (for $i\ge 1$) that if 
$|\mathcal{F}_{i-1}|>1$ then we can compute a set $\mathcal{F}_i$ of monic
polynomials with the 
$(p^t)$-generating property with $\mathcal{F}_i\cap pD[X] = \emptyset$ such
that $(\min_{f\in\calF_i} \deg f, \abs{\calF_i})$ decreases lexicographically
in each step.

Since the degree of monic polynomials in $\nullideal{p^t}{B}$ is clearly
bounded from below,
we end up with a singleton satisfying the $(p^t)$-generating 
property. Hence, by 
Proposition~\ref{proposition:singletons-with-generating-property} the singleton at the end contains a 
$(p^t)$-minimal polynomial.

It turns out that polynomial division is a useful tool to compute $\mathcal{F}_i$. 
However, we are working in $D[X]$, so we cannot just divide some polynomial by another; we 
want to deal with monic polynomials to guarantee that polynomial division is applicable. 
Algorithms~\ref{algorithm:replace-by-monic-polynomial} and \ref{algorithm:replace-by-monic-polynomials} provide the tools to replace a set with the 
$(p^t)$-generating property by another one which consists only of monic polynomials.

\begin{algorithm}
  \begin{algorithmic}
    \REQUIRE $p\in D$ prime, $t\ge 1$, $\nu_{t-1}$ a $(p^{t-1})$-minimal
    polynomial of $B$ and $f\in \nullideal{p^t}{B} \setminus pD[X]$
    \ENSURE Monic polynomial $h\in \nullideal{p^t}{B}$ with $\deg(h)\le \deg(f)$
    \STATE Write $f=f_1+pf_2$ such that all non-zero coefficients of $f_1$ are
    not divisible by $p$
    \STATE Let $r$ be the remainder of $f_2$ modulo $\nu_{t-1}$ with $\deg
    r<\deg \nu_{t-1}$
    \STATE Choose $u$, $v\in D$ with $u\lc(f_1) + vp=1$
    \STATE $h\coloneqq u(f_1+pr)+vX^{\deg(f_1)-\deg(\nu_{t-1})}p\nu_{t-1}$
  \end{algorithmic}
  \caption{Find monic polynomial}
  \label{algorithm:replace-by-monic-polynomial}
  
\end{algorithm}

\begin{lemma}\label{lemma:find-monic-polynomial}
  Algorithm~\ref{algorithm:replace-by-monic-polynomial} is correct.
\end{lemma}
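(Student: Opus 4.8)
**Proof plan for Lemma~\ref{lemma:find-monic-polynomial} (correctness of Algorithm~\ref{algorithm:replace-by-monic-polynomial}).**

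The plan is to verify the two asserted properties of the output polynomial $h$ in turn: that it is monic, that $\deg(h)\le\deg(f)$, and that $h\in\nullideal{p^t}{B}$. First I would set up notation following the algorithm: we have $f=f_1+pf_2$ with no non-zero coefficient of $f_1$ divisible by $p$. Since $f\notin pD[X]$, the polynomial $f_1$ is non-zero; moreover $f(B)\equiv f_1(B)\pmod{p}$ combined with standard facts should give $\deg(f_1)\le\deg(f)$ — actually $\deg(f_1)\le\deg(f)$ is immediate since $f_1$ collects a subset of the coefficient positions of $f$ (those where the coefficient is a $p$-unit part). Next, $r$ is the remainder of $f_2$ on division by the monic polynomial $\nu_{t-1}$, so $f_2=q\nu_{t-1}+r$ with $\deg r<\deg\nu_{t-1}$. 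A key preliminary observation I would record is that $\deg\nu_{t-1}\le\deg f_1$: indeed $\nu_{t-1}$ is a $(p^{t-1})$-minimal polynomial, $f\in\nullideal{p^t}{B}\subseteq\nullideal{p^{t-1}}{B}$, and $f\notin pD[X]$, so Lemma~\ref{lemma:degree-lower-bound} (applied at level $t-1$) gives $\deg f\ge\deg\nu_{t-1}$; one then has to upgrade this to $\deg f_1\ge\deg\nu_{t-1}$, which follows because $f-f_1=pf_2\in pD[X]\subseteq p\nullideal{p^{t-2}}{B}\subseteq\nullideal{p^{t-1}}{B}$, so $f_1=f-pf_2\in\nullideal{p^{t-1}}{B}$ and $f_1\notin pD[X]$, whence Lemma~\ref{lemma:degree-lower-bound} applies directly to $f_1$. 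This makes the exponent $\deg(f_1)-\deg(\nu_{t-1})$ in the last line of the algorithm non-negative, so $X^{\deg(f_1)-\deg(\nu_{t-1})}p\nu_{t-1}$ is a genuine polynomial of degree $\deg(f_1)$.

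With these in hand, the degree bound and monicity come out together. Every summand in $h=u(f_1+pr)+vX^{\deg(f_1)-\deg(\nu_{t-1})}p\nu_{t-1}$ has degree at most $\deg(f_1)$: for $f_1+pr$ note $\deg(pr)<\deg\nu_{t-1}\le\deg f_1$, and for the other term the degree is exactly $\deg f_1$. So $\deg h\le\deg f_1\le\deg f$. For the leading coefficient: the coefficient of $X^{\deg f_1}$ in $u(f_1+pr)$ is $u\lc(f_1)$ (since $pr$ contributes nothing at that degree), and in $vX^{\deg(f_1)-\deg(\nu_{t-1})}p\nu_{t-1}$ it is $vp\lc(\nu_{t-1})=vp$ because $\nu_{t-1}$ is monic; hence $\lc(h)=u\lc(f_1)+vp=1$ by the choice of $u,v$. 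In particular $h$ has degree exactly $\deg f_1$ and is monic (so in particular non-zero).

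Finally I would check $h\in\nullideal{p^t}{B}$, which is where the real content lies. The term $u(f_1+pr)$: we have $f_1=f-pf_2\equiv f-p(q\nu_{t-1}+r)=f-pq\nu_{t-1}-pr\pmod{}$ as an identity in $D[X]$, so $f_1+pr=f-pq\nu_{t-1}$, i.e. $u(f_1+pr)=uf-upq\nu_{t-1}$. Now $uf\in\nullideal{p^t}{B}$ since $f$ is and the ideal is a $D[X]$-module, and $upq\nu_{t-1}=up q\cdot\nu_{t-1}\in p\nullideal{p^{t-1}}{B}\subseteq\nullideal{p^t}{B}$ because $\nu_{t-1}\in\nullideal{p^{t-1}}{B}$ and $p\nullideal{p^{t-1}}{B}\subseteq\nullideal{p^t}{B}$. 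The remaining term $vX^{\deg(f_1)-\deg(\nu_{t-1})}p\nu_{t-1}$ again lies in $p\nullideal{p^{t-1}}{B}\subseteq\nullideal{p^t}{B}$ for the same reason. Adding up, $h\in\nullideal{p^t}{B}$, completing the proof. The main obstacle I anticipate is not any single step but getting the degree bookkeeping airtight — in particular justifying $\deg\nu_{t-1}\le\deg f_1$ (needed for the exponent to be $\ge 0$ and for the leading-term analysis), which as sketched above reduces cleanly to applying Lemma~\ref{lemma:degree-lower-bound} to $f_1$ rather than to $f$ directly; the membership claims are then routine module manipulations using $p\nullideal{p^{t-1}}{B}\subseteq\nullideal{p^t}{B}$.
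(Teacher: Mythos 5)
Your overall route coincides with the paper's: the decomposition $f=f_1+pf_2$, the monicity computation via $u\lc(f_1)+vp=1$, and the membership argument based on the identity $f_1+pr=f-q(p\nu_{t-1})$ together with $p\nullideal{p^{t-1}}{B}\subseteq\nullideal{p^t}{B}$ are all exactly as in the paper. The genuine gap is in your justification of the key inequality $\deg(\nu_{t-1})\le\deg(f_1)$, on which the non-negativity of the exponent $\deg(f_1)-\deg(\nu_{t-1})$ and your leading-term analysis both rest. You derive it from the claim $f_1\in\nullideal{p^{t-1}}{B}$, justified by the chain $pf_2\in pD[X]\subseteq p\nullideal{p^{t-2}}{B}$; but the inclusion $pD[X]\subseteq p\nullideal{p^{t-2}}{B}$ is equivalent to $D[X]\subseteq\nullideal{p^{t-2}}{B}$, which fails for every $t\ge 3$. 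From $f\in\nullideal{p^t}{B}$ and $f=f_1+pf_2$ one only gets $f_1\in\nullideal{p}{B}$, and indeed $f_1\notin\nullideal{p^{t-1}}{B}$ can happen: in the paper's worked example with $p=2$ and $t=3$ one has $f=X^3+7X^2+6X\in\nullideal{8}{B}$, hence $f_1=X^3+7X^2$, but $(B^3+7B^2)_{1,1}=21+7\cdot 11=98\equiv 2\pmod 4$, so $f_1\notin\nullideal{4}{B}$. Thus Lemma~\ref{lemma:degree-lower-bound} cannot be applied to $f_1$, and your argument only covers $t\le 2$.

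The step is repaired exactly as in the paper's proof: apply Lemma~\ref{lemma:degree-lower-bound} not to $f_1$ but to $\tilde h\coloneqq f_1+pr=f-q(p\nu_{t-1})$. This polynomial lies in $\nullideal{p^t}{B}\subseteq\nullideal{p^{t-1}}{B}$ (your own membership computation shows this), and it is not in $pD[X]$ because its coefficient of $X^{\deg(f_1)}$ is congruent to $\lc(f_1)\not\equiv 0\pmod p$; moreover $\deg(\tilde h)=\max\{\deg(f_1),\deg(r)\}$ since the coefficients of $f_1$ are prime to $p$ while those of $pr$ are not. The lemma then gives $\deg(\tilde h)\ge\deg(\nu_{t-1})>\deg(r)$, which forces $\deg(\tilde h)=\deg(f_1)\ge\deg(\nu_{t-1})$ and $\lc(\tilde h)=\lc(f_1)$; with this established, the rest of your degree, leading-coefficient and membership bookkeeping goes through unchanged.
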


\begin{proof}

Let $q\in D[X]$ such that $f_2 = q\nu_{t-1} + r$. We set
\begin{equation}
\tilde  h = f_1 + pr = f- q(p\nu_{t-1})  \in \nullideal{p^t}{B}.
\end{equation}
Since $p$ does not divide $\lc(f_1)$,
the leading terms of $f_1$ and $pr$ cannot cancel 
each other out and $\deg(\tilde h) = \max\{\deg(f_1),\deg(r)\}$. On the other hand, 
$\tilde h\in \nullideal{p^t}{B}\setminus pD[X]$ and therefore $\deg(\tilde h) \ge 
\deg(\nu_{t})$ by Lemma~\ref{lemma:degree-lower-bound}. We conclude that
\begin{equation*}
 \deg(\nu_{t-1}) \le \deg(\nu_{t}) \le \deg(\tilde h) = \max\{\deg(f_1),\deg(r)\} \le 
\max\{\deg(f_1),\deg(\nu_{t-1})-1\}  
\end{equation*}
and it follows that $\max\{\deg(f_1),\deg(\nu_{t-1})-1\} =\deg(f_1) = \deg(\tilde h)$ and 
therefore $p \nmid \lc(\tilde h)$.

As $\deg(\tilde h)\ge \deg(\nu_{t-1}) >\deg(r)$, we have $\lc(\tilde
h)=\lc(f_1)$. Thus $h$ is monic.
\end{proof}

\begin{algorithm}
  \begin{algorithmic}
    \REQUIRE  $p\in D$ prime, $t\ge 1$, $f\in \nullideal{p^t}{B} \setminus
    pD[X]$
    \ENSURE Monic polynomials $h_1$, \ldots, $h_s \in \nullideal{p^t}{B}$
    such that 
    \begin{enumerate}
    \item $f \in (h_1,\ldots,h_s) + p\nullideal{p^{t-1}}{B}$ and
    \item $\deg(f) \ge \deg(h_1) > \cdots > \deg(h_s)$.
    \end{enumerate}
    \STATE $i\coloneqq 0$, $f_i=f$
    \WHILE{$f_i\notin pD[X]$}
       \STATE $i\coloneqq i+1$
       \STATE Determine monic $h_i\in\nullideal{p^t}{B}$ with $\deg(h_i) \le
       \deg(f_{i-1})$ (Algorithm~\ref{algorithm:replace-by-monic-polynomial})
       \STATE Let $f_i\in 
D[X]$ be  the  remainder of $f_{i-1}$ modulo $h_i$ with 
$\deg(f_{i}) < \deg(h_i)$.
    \ENDWHILE
    \STATE $s\coloneqq i$
  \end{algorithmic}
  \caption{Replacing by monic polynomials}\label{algorithm:replace-by-monic-polynomials}
\end{algorithm}

\begin{lemma}\label{lemma:replace-by-monic-polynomials}
  Algorithm~\ref{algorithm:replace-by-monic-polynomials} terminates and is correct.
\end{lemma}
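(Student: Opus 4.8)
The plan is to prove termination first, then correctness. For termination, I would track the pair $(\deg f_i)_{i\ge 0}$ produced by the \textbf{while} loop. In each iteration we first call Algorithm~\ref{algorithm:replace-by-monic-polynomial} to get a monic $h_i$ with $\deg(h_i)\le\deg(f_{i-1})$, and then replace $f_{i-1}$ by its remainder $f_i$ modulo the monic polynomial $h_i$, so $\deg(f_i)<\deg(h_i)\le\deg(f_{i-1})$. Hence the sequence $\deg(f_0)>\deg(f_1)>\cdots$ is strictly decreasing in $\N_0$ (reading $\deg(0)$ as $-\infty$, or simply observing that once $\deg(f_i)<\deg(h_{i+1})$ fails to be reachable the loop has stopped), so after finitely many steps we must reach $f_i\in pD[X]$ — in the extreme case $f_i=0\in pD[X]$ — and the loop terminates. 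This also yields $s\le\deg(f)+1$.

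For correctness I must verify the two output conditions. Condition~(2), $\deg(f)\ge\deg(h_1)>\cdots>\deg(h_s)$, is immediate: $\deg(h_1)\le\deg(f_0)=\deg(f)$ from Algorithm~\ref{algorithm:replace-by-monic-polynomial}, and for $i\ge 2$ we have $\deg(h_i)\le\deg(f_{i-1})<\deg(h_{i-1})$ since $f_{i-1}$ is the remainder modulo $h_{i-1}$. (I should note $h_i\in\nullideal{p^t}{B}$ is guaranteed by Lemma~\ref{lemma:find-monic-polynomial}, and that the hypothesis $f_{i-1}\notin pD[X]$ needed to invoke Algorithm~\ref{algorithm:replace-by-monic-polynomial} is exactly the \textbf{while} condition.) Condition~(1) is proved by a downward/invariant argument on the relation $f\in(h_1,\ldots,h_i)+f_iD[X]$: initially $f=f_0$, and the update $f_i=f_{i-1}-q_ih_i$ for some $q_i\in D[X]$ shows $f_{i-1}\in(h_i)+f_iD[X]$, so inductively $f\in(h_1,\ldots,h_i)+f_iD[X]$ for all $i$. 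At termination $f_s\in pD[X]$; since $f_s$ is the remainder of $f_{s-1}\in\nullideal{p^t}{B}$ modulo $h_s\in\nullideal{p^t}{B}$, we have $f_s\in\nullideal{p^t}{B}\cap pD[X]=p\nullideal{p^{t-1}}{B}$ (using the identity recalled just before Lemma~\ref{lemma:degree-lower-bound}), whence $f\in(h_1,\ldots,h_s)+f_sD[X]\subseteq(h_1,\ldots,h_s)+p\nullideal{p^{t-1}}{B}$, which is condition~(1).

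The one subtle point — the part I would treat most carefully — is handling the boundary behavior of the loop: making sure that after the last call to Algorithm~\ref{algorithm:replace-by-monic-polynomial} and the last remainder step we genuinely land in $pD[X]$, and that $f_s$ is still an element of $\nullideal{p^t}{B}$ so that the intersection identity $\nullideal{p^t}{B}\cap pD[X]=p\nullideal{p^{t-1}}{B}$ applies. Both follow because every $f_i$ is obtained from $f_{i-1}$ by subtracting a $D[X]$-multiple of $h_i\in\nullideal{p^t}{B}$, so membership in $\nullideal{p^t}{B}$ is preserved throughout; and the loop exits precisely when $f_i\in pD[X]$, so $f_s\in pD[X]$ by construction. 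Everything else is the bookkeeping sketched above.
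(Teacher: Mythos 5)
Your proposal is correct and follows essentially the same route as the paper: termination via the strictly decreasing degrees $\deg(f_i)<\deg(h_i)\le\deg(f_{i-1})$, and condition (1) via the invariant $f_{i-1}\in(h_i)+f_iD[X]$ combined with $f_s\in\nullideal{p^t}{B}\cap pD[X]=p\nullideal{p^{t-1}}{B}$ at exit. You are in fact slightly more explicit than the paper, which leaves the verification of the degree chain in condition (2) and the preservation of membership in $\nullideal{p^t}{B}$ implicit.
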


\begin{proof}
  The construction implies that $f_{i}\in \nullideal{p^t}{B}$ and $f_{i-1} \in 
(h_i,f_i)$.
  
Further, $\deg(f_{i})<\deg(f_{i-1})$ holds which implies that there exists $s\in \N$ 
such that $f_{s} \in pD[X]$. Hence $f_s \in p\nullideal{p^{t-1}}{B}$ and 
  \begin{equation*}
     f=f_0 \in (h_1,f_1) \subseteq  (h_1,h_2,f_2) \subseteq \cdots \subseteq  
(h_1,h_2,\ldots,h_s,f_s) \subseteq  (h_1,\ldots,h_s) + p\nullideal{p^{t-1}}{B}.
  \end{equation*}
\end{proof}

We can now replace $\mathcal{F}$ by a set with the $(p^t)$-generating property which 
consists only of monic polynomials using Algorithm~\ref{algorithm:replace-by-monic-polynomials}. Note that we need to know a 
$(p^{t-1})$-minimal polynomial to do the necessary computations.  
We are now ready to present Algorithm~\ref{algorithm:computenu} to compute a $(p^t)$-minimal polynomial.
\begin{algorithm}[h!]
  \begin{algorithmic}
    \REQUIRE $t\ge 1$, $\mathcal{F}\subseteq D[X]$ with the $(p^t)$-generating property, 
              $\nu_{t-1}$ a $(p^{t-1})$-minimal polynomial
    \ENSURE $(p^t)$-minimal polynomial $\nu_t$ of $B$.
    \STATE Delete all elements in $\mathcal{F} \cap p D[X]$ from $\mathcal{F}$ and then
           replace non-monic polynomials in $\mathcal{F}$ by monic polynomials using 
           Algorithm~\ref{algorithm:replace-by-monic-polynomials}
    \STATE Let $g\in \mathcal{F}$ be of minimal degree.
    \WHILE{$f\in \mathcal{F}$ with $f\neq g$}
       \STATE $\mathcal{F}\coloneqq\mathcal{F} \setminus \{f\}$
       \STATE Let $q$, $r\in D[X]$ such that $f=qg+r$ and $\deg(r)<\deg(g)$.
       \IF {$r\notin pD[X]$}
         \STATE  Let $h_1$, $\ldots$, $h_s$ be monic polynomials with $r \in 
                 (h_1,\ldots,h_s) + p\nullideal{p^{t-1}}{B}$ 
                 (Algorithm~\ref{algorithm:replace-by-monic-polynomials}). 
         \STATE Set $g\coloneqq h_s$ and $\mathcal{F}\coloneqq \mathcal{F} \cup\{ h_1,\ldots,h_s \}$.
       \ENDIF
    \ENDWHILE
    \STATE $\nu_t\coloneqq g$
  \end{algorithmic}
  \caption{Computation of  a $(p^t)$-minimal polynomial}
  \label{algorithm:computenu}
\end{algorithm}

\begin{proposition}
 Algorithm~\ref{algorithm:computenu} terminates and is correct.
\end{proposition}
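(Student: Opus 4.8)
The plan is to show two things: termination and correctness. For termination, I would track the pair $(\deg g, \abs{\calF})$ — the degree of the current candidate $g$ together with the number of remaining polynomials in $\calF$ — and argue it decreases lexicographically in each iteration of the while-loop. In an iteration where $r\in pD[X]$, the set $\calF$ loses the element $f$ and $g$ is unchanged, so $\abs{\calF}$ strictly decreases while $\deg g$ stays the same. In an iteration where $r\notin pD[X]$, we invoke Algorithm~\ref{algorithm:replace-by-monic-polynomials} on $r$, obtaining monic $h_1,\dots,h_s$ with $\deg(r)\ge\deg(h_1)>\cdots>\deg(h_s)$; since $\deg(r)<\deg(g)$, the new candidate $g=h_s$ has degree $\le\deg(h_s)\le\deg(r)<\deg(g_{\mathrm{old}})$, so $\deg g$ strictly drops. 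As $\deg g$ is a nonnegative integer and, for fixed $\deg g$, the quantity $\abs{\calF}$ is a nonnegative integer that only decreases until $\deg g$ drops again, the loop must terminate. (One should also note the preprocessing step terminates: Algorithm~\ref{algorithm:replace-by-monic-polynomials} terminates by Lemma~\ref{lemma:replace-by-monic-polynomials}, and it is applied to each of the finitely many polynomials of $\calF$ once.)

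For correctness, the key invariant to maintain through the whole algorithm is: \emph{$\calF$ consists of monic polynomials, $\calF\cap pD[X]=\emptyset$ is not needed but $\calF$ together with $p\nullideal{p^{t-1}}{B}$ generates $\nullideal{p^t}{B}$, and $g\in\calF$ has minimal degree among elements of $\calF$.} The preprocessing step establishes this: deleting elements of $\calF\cap pD[X]$ preserves the $(p^t)$-generating property because $\nullideal{p^t}{B}\cap pD[X]=p\nullideal{p^{t-1}}{B}$ (as recalled in the text just before Lemma~\ref{lemma:degree-lower-bound}), and replacing each non-monic polynomial $f$ by the monic $h_1,\dots,h_s$ from Algorithm~\ref{algorithm:replace-by-monic-polynomials} preserves it by part~(1) of that algorithm's specification together with $f\in\nullideal{p^t}{B}$. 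Inside the loop: when we replace $f$ by $q,r$ with $f=qg+r$, we have $f\in(g,r)$, so removing $f$ and (if $r\notin pD[X]$) adding the monic $h_1,\dots,h_s$ with $r\in(h_1,\dots,h_s)+p\nullideal{p^{t-1}}{B}$ keeps $(\calF)+p\nullideal{p^{t-1}}{B}$ equal to $\nullideal{p^t}{B}$; and when $r\in pD[X]$ we have $r\in p\nullideal{p^{t-1}}{B}$ (again since $f,qg\in\nullideal{p^t}{B}$ forces $r\in\nullideal{p^t}{B}\cap pD[X]$), so $f=qg+r$ already lies in $(g)+p\nullideal{p^{t-1}}{B}\subseteq(\calF\setminus\{f\})+p\nullideal{p^{t-1}}{B}$ and removing $f$ is harmless. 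Finally, re-setting $g:=h_s$ restores the "minimal degree in $\calF$" property because $\deg(h_s)<\deg(h_{s-1})<\cdots$ and $\deg(h_s)\le\deg(r)<\deg(g_{\mathrm{old}})\le\deg(f')$ for every other $f'$ that was in $\calF$.

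When the loop exits, $\calF=\{g\}$ is a singleton consisting of one monic polynomial with $(g)+p\nullideal{p^{t-1}}{B}=\nullideal{p^t}{B}$, so Proposition~\ref{proposition:singletons-with-generating-property} applies and shows $g$ is a $(p^t)$-minimal polynomial; hence the returned $\nu_t=g$ is correct. The main obstacle is the bookkeeping in the $r\notin pD[X]$ branch: one must be careful that replacing $g$ by $h_s$ — which has \emph{smaller} degree than the previous $g$ — does not break the running generating-set invariant, and that the newly inserted $h_1,\dots,h_i$ with $i<s$ (which have degree strictly between $\deg h_s$ and $\deg r$) are correctly counted so that the lexicographic measure still drops; verifying that $r\notin pD[X]$ indeed guarantees Algorithm~\ref{algorithm:replace-by-monic-polynomials} is applicable (its input requirement is exactly $r\in\nullideal{p^t}{B}\setminus pD[X]$, and $r\in\nullideal{p^t}{B}$ because $f,qg\in\nullideal{p^t}{B}$) is the small technical point that makes the whole loop well-defined. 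A final subtlety worth flagging: although the specification of Algorithm~\ref{algorithm:replace-by-monic-polynomials} requires a $(p^{t-1})$-minimal polynomial as input (used internally via Algorithm~\ref{algorithm:replace-by-monic-polynomial}), this is available since $\nu_{t-1}$ is passed into Algorithm~\ref{algorithm:computenu}.
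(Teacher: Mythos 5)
Your proof is correct and follows essentially the same route as the paper: you maintain the invariant that $\calF$ consists of monic polynomials with the $(p^t)$-generating property (using $\nullideal{p^t}{B}\cap pD[X]=p\nullideal{p^{t-1}}{B}$ and the output specification of Algorithm~\ref{algorithm:replace-by-monic-polynomials}), show that $(\min_{f\in\calF}\deg f,\abs{\calF})$ decreases lexicographically in each iteration, and conclude via Proposition~\ref{proposition:singletons-with-generating-property} that the final singleton contains a $(p^t)$-minimal polynomial. Your measure $(\deg g,\abs{\calF})$ coincides with the paper's since $g$ is kept of minimal degree in $\calF$, so there is no substantive difference.
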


\begin{proof}
We will show that in every step, $\calF$ consists of monic polynomials and has the
$(p^t)$-generating property and $(\min_{f\in\calF}\deg(f), \abs{\calF})$
decreases lexicographically in each step.

This implies that the algorithm  computes a singleton with the $(p^t)$-generating property. 
According to Proposition~\ref{proposition:singletons-with-generating-property}, such a singleton  
contains a $(p^t)$-minimal polynomial.

Removing all polynomials in $\calF\cap pD[X]\subseteq p\nullideal{p^{t-1}}{B}$
in the first step does not affect the $(p^t)$-generating property. The same
holds for replacing
non-monic polynomials by Algorithm~\ref{algorithm:replace-by-monic-polynomials}.

Now, let $\mathcal{F}_0$ be the result of this first step in the algorithm  and 
$\mathcal{F}_i$ be the resulting set after $i$ iterations of the while loop. Further, let 
$g_i$ be a polynomial of minimal degree in $\mathcal{F}_i$.
  
Now  assume that $|\mathcal{F}_{i-1}|>1$ and let us have a closer look at the $i$-th 
iteration of the while loop. 
For a polynomial $f\in \mathcal{F}_{i-1}$ with $f\neq g_{i-1}$, the algorithm computes 
the remainder $r$ of $f$ modulo $g_{i-1}$ with $\deg(r) < \deg(g_{i-1})$. Then the 
following holds
  \begin{equation*}
   \nullideal{p^t}{B} = (\mathcal{F}_{i-1} \setminus \{f\}) + (r) + p\nullideal{p^{t-1}}{B}.
  \end{equation*}

 We split into two cases: $r\in pD[X]$ and $r\notin pD[X]$.  If $r\in pD[X]$, then $r\in 
p\nullideal{p^{t-1}}{B}$ and hence $\mathcal{F}_i = \mathcal{F}_{i-1}\setminus \{f\}$ has 
the $(p^t)$-generating property. In this case, $|\mathcal{F}_i|<|\mathcal{F}_{i-1}|$ holds 
and $g_i = g_{i-1}$ is a polynomial of minimal degree in $\mathcal{F}_i$.

 If, however, $r\notin pD[X]$, then the algorithm computes monic polynomials 
$h_1$, $\ldots$, $h_s$ with $\deg(r) \ge \deg(h_1) > \cdots >\deg(h_s)$ and $r \in 
(h_1,\ldots,h_s) + p\nullideal{p^{t-1}}{B}$.
 Hence $\mathcal{F}_i = \{h_1,\ldots,h_s\} \cup \mathcal{F}_{i-1}\setminus \{f\}$ has the 
$(p^t)$-generating property and $g_i = h_s$ is a polynomial of minimal degree in 
$\mathcal{F}_i$. In this case, $\deg(g_i)=\deg(h_s)\le \deg(r)<\deg(g_{i-1})$.
\end{proof}

We conclude this section with Algorithm~\ref{algorithm:everything} that computes the generators of $\nullideal{p^t}{B}$ of a matrix 
$B\in \Mn{D}$ and a prime element $p\in D$ as stated in Theorem~\ref{theorem:structure}, that 
are  $(p^s)$-minimal polynomials $\nu_s$ for indices $s$ of a finite set $\calS$ such that for all $t\ge 1$,
  \begin{equation*}
    \nullideal{p^t}{B}=\mu_BD[X] + p^tD[X] + 
      \sum_{\substack{s\in\calS \\ s \le  b(t) }} p^{\max\{0,t-s\}}\nu_{s}D[X]  
  \end{equation*}
  holds where $ b(t) = \inf\{r\in \calS \mid r \ge t\}$.

\begin{algorithm}[h!]
  \begin{algorithmic}
    \REQUIRE $B\in\Mn{D}$, $p\in D$ prime
    \ENSURE $\calS$, $\nu_{s}$ for $s\in\calS$ (Theorem~\ref{theorem:structure})
    \STATE $\chi_B\coloneqq$ characteristic polynomial of $B$
    \STATE $\mu_B\coloneqq$ minimal polynomial of $B$ over quotient field $K$
    \STATE $\pmb{b}\coloneqq$ the entries of $\adj(X-B)$ in some fixed order
    \STATE $A\coloneqq\blockrow{\pmb{b}}{-\chi_BI}$
    \STATE $t\coloneqq 0$, $\calS\coloneqq \emptyset$,
    $G\coloneqq$ the $((n^2+1)\times 0)$-matrix, $\nu_0\coloneqq 1$
    \WHILE{$\mathit{True}$}
    \STATE $t\coloneqq t+1$
    \STATE {\emph{\small(The columns of $\blockrow{p^{t-1}I}{G}$ generate
    $\mccoymodule{t-1}{A}$)}}
    \STATE Determine $F$ such that
    $\begin{pmatrix}{p^{t}I}&{F}&pG\end{pmatrix}$ are generators of
    $\mccoymodule{t}{A}$ by Algorithm~\ref{algorithm:lifting}
    \STATE $\calF\coloneqq$ first row of $F$
    \STATE $\nu_t\coloneqq$ $(p^t)$-minimal polynomial of $B$ by
    Algorithm~\ref{algorithm:computenu} (using $\nu_{t-1}$)
    \IF{$\deg \nu_t\ge\deg\mu_B$}
    \RETURN {$\calS$, $\nu_{s}$ for $s\in\calS$}
    \ENDIF
    \FOR{$i=1, \ldots, n^2$}
    \STATE $\nu_t b_i=g_i\chi_B+r_i$ (long division)
    \ENDFOR
    \STATE $\pmb{g}\coloneqq(g_i)_{1\le i\le n^2}$
    \IF{ $\deg \nu_t = \deg \nu_{t-1}$}
    \STATE Delete last column of $G$
    \STATE $\calS\coloneqq \calS \setminus \{t-1\}$
    \ENDIF
    \STATE $G\coloneqq
    \begin{pmatrix}
      pG&
      \begin{matrix}
        \nu_t\\\pmb{g}
      \end{matrix}
    \end{pmatrix}$
    \STATE $\calS\coloneqq \calS \cup \{t\}$
    \ENDWHILE
  \end{algorithmic}
  \caption{Computation of $\calS$ and minimal polynomials $\nu_s$ for $s\in\calS$}
  \label{algorithm:everything}
\end{algorithm}

\begin{theorem}\label{theorem:everything-is-correct}
  Algorithm~\ref{algorithm:everything} terminates and is correct.

\end{theorem}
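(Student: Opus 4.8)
The plan is to prove termination and correctness of Algorithm~\ref{algorithm:everything} by combining the invariants established for its two subroutines with the abstract structure theory of Theorem~\ref{theorem:structure}. The key loop invariant to maintain is the parenthetical comment in the pseudocode: at the start of iteration $t$, the columns of $\blockrow{p^{t-1}I}{G}$ generate $\mccoymodule{t-1}{A}$, and moreover $\calS$ together with the stored $\nu_s$, $s\in\calS$, correctly record the $(p^s)$-minimal polynomials accumulated so far, with $G$ holding the columns $p^{t-1-s}\nu_s$ for $s\in\calS$ (one column per element of $\calS$). First I would verify the base case $t=1$: $G$ is the empty $((n^2+1)\times 0)$-matrix, $\blockrow{p^0 I}{G}=I$ generates $\mccoymodule{0}{A}=D[X]^{n^2+1}$, and $\calS=\emptyset$. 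For the inductive step, Proposition~\ref{prop:lifting} (via Proposition~\ref{proposition:M-N-generators}) guarantees that after the call to Algorithm~\ref{algorithm:lifting} the columns of $\begin{pmatrix}p^tI&F&pG\end{pmatrix}$ generate $\mccoymodule{t}{A}$; hence by Proposition~\ref{proposition:M-N-generators} the first rows give $p^t$ together with $\calF=$ (first row of $F$) and $p\cdot(\text{first row of }G)$ as a generating set of $\nullideal{p^t}{B}$. Since the first row of $G$ lies in $\nullideal{p^{t-1}}{B}$ by the inductive hypothesis, this shows $\calF$ has the $(p^t)$-generating property w.r.t.\ $B$ in the sense of Definition~\ref{def:pt-generating-property}, so Algorithm~\ref{algorithm:computenu} is applicable and returns a genuine $(p^t)$-minimal polynomial $\nu_t$.

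Next I would argue that the bookkeeping of $G$ and $\calS$ is restored correctly. By \eqref{eq:recursive-description} we have $\nullideal{p^t}{B}=(\nu_t)+p\nullideal{p^{t-1}}{B}$, and combined with \eqref{eq:nr-of-generators} applied inductively this yields $\nullideal{p^t}{B}=\nu_tD[X]+\sum_{s\in\calS_{\mathrm{old}}}p^{t-s}\nu_sD[X]+p^tD[X]$. The long divisions $\nu_t b_i = g_i\chi_B+r_i$ are exactly what Proposition~\ref{proposition:M-N-generators} needs: since $\nu_t\in\nullideal{p^t}{B}$, Corollary~\ref{cor:mccoy-form} gives $\adj(X-B)\nu_t\equiv Q\chi_B\pmod{p^t}$, and because $\chi_B$ is monic the quotients $g_i$ satisfy $\nu_t b_i\equiv g_i\chi_B\pmod{p^t}$, so $\begin{pmatrix}\nu_t\\\pmb g\end{pmatrix}\in\mccoymodule{t}{A}$ and adjoining this column to $pG$ (the columns $p^{t-s}\nu_s$) produces a matrix whose columns together with $p^tI$ generate $\mccoymodule{t}{A}$. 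The conditional deletion of the last column of $G$ and removal of $t-1$ from $\calS$ when $\deg\nu_t=\deg\nu_{t-1}$ reflects Theorem~\ref{theorem:structure}: in that case $\nu_{t-1}$ is redundant because $p^{t-(t-1)}\nu_{t-1}=p\nu_{t-1}\in p\nullideal{p^{t-1}}{B}\subseteq(\nu_t)+p\nullideal{p^{t-1}}{B}$ already, and degrees of the $\nu_s$ must be strictly increasing; this is precisely the reduction of Equation~\eqref{eq:nr-of-generators} to the $(t-1)$-st index set. Thus the updated $G$ has one column per element of the updated $\calS$, and the invariant is re-established for iteration $t+1$.

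For termination, the stopping criterion $\deg\nu_t\ge\deg\mu_B$ is reached in finitely many steps: the $\nu_t$ are monic polynomials in $\nullideal{p^t}{B}\supseteq\nullideal{p^{t-1}}{B}$, so $\deg\nu_1\le\deg\nu_2\le\cdots$, and each is bounded above by $\deg\mu_B$ since $\mu_B\in\nullideal{0}{B}\subseteq\nullideal{p^t}{B}$ is monic, forcing $\deg\nu_t\le\deg\mu_B$; moreover once $\deg\nu_t=\deg\mu_B$ (or, as one checks, once the degree stabilizes) the $(p^s)$-ideals stabilize in the sense of Theorem~\ref{theorem:structure}, so $\max\calS_p$ has been reached. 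More carefully, I would invoke Theorem~\ref{theorem:structure} directly: it asserts the existence of a \emph{finite} set $\calS_p$ with $\deg\nu_{(p,s)}$ strictly increasing and $\nu_{(p,\max\calS_p)}$ of degree $\deg\mu_B$ when $\mu_B$ is not itself the bound; the loop generates exactly the elements of $\calS_p$ in increasing order and halts when it first produces a $\nu_t$ of degree $\ge\deg\mu_B$, at which point $t=\successor$ of the largest true index and $\calS$ already equals $\calS_p$. Correctness of the returned data then follows by substituting the final $\calS$ and $\{\nu_s\}$ into the formula of Theorem~\ref{theorem:structure}, which holds for all $t\ge1$ by what that theorem guarantees. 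The main obstacle I expect is the careful matching of the loop's index shift — that when the algorithm returns during iteration $t$, the set $\calS$ it has built is exactly $\calS_p$ and not $\calS_p$ together with a spurious index — which requires pinning down precisely when $\deg\nu_t$ first meets $\deg\mu_B$ relative to the definition of $\successor$ in Theorem~\ref{theorem:structure}, together with checking the edge case handled by the remark ``if $t\le\max\calS_p$ then the summand $\mu_BD[X]$ can be omitted.''
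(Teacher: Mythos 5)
Your overall architecture (loop invariant that the columns of $\blockrow{p^{t-1}I}{G}$ generate $\mccoymodule{t-1}{A}$, correctness of the two subroutines, the McCoy/long-division argument showing $\nu_t b_i\equiv g_i\chi_B\pmod{p^t}$ by degree comparison since $\chi_B$ is monic, termination via the degree bound) matches the paper's proof. The genuine gap is your justification of the deletion step when $\deg\nu_t=\deg\nu_{t-1}$. You argue that the column for $\nu_{t-1}$ is redundant because $p\nu_{t-1}\in p\nullideal{p^{t-1}}{B}\subseteq(\nu_t)+p\nullideal{p^{t-1}}{B}$; but this is circular: in the invariant you maintain, $p\nullideal{p^{t-1}}{B}$ is itself described by the generators $p^{t}$ and $p^{t-s}\nu_s$, $s\in\calS$, among which $p\nu_{t-1}$ is precisely the one you wish to discard. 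What must be shown is that $p\nu_{t-1}$ lies in the ideal generated by the \emph{remaining} data, equivalently that $\nullideal{p^{t-1}}{B}=(\nu_t)+p\nullideal{p^{t-2}}{B}$, i.e.\ that $\nu_t$ has the $(p^{t-1})$-generating property. The paper proves exactly this: when the degrees coincide, $\nu_t$ is a $(p^{t-1})$-minimal polynomial, $\nu_t-\nu_{t-1}\in\nullideal{p^{t-1}}{B}$ has degree less than $\deg\nu_{t-1}$, hence by Lemma~\ref{lemma:degree-lower-bound} it lies in $pD[X]$ and therefore in $p\nullideal{p^{t-2}}{B}$, giving $(\nu_t)+p\nullideal{p^{t-2}}{B}=(\nu_{t-1})+p\nullideal{p^{t-2}}{B}=\nullideal{p^{t-1}}{B}$. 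Your appeal to ``the reduction to the $(t-1)$-st index set'' does not substitute for this; without it, the invariant that the columns of $\blockrow{p^{t}I}{G}$ generate $\mccoymodule{t}{A}$ may fail after a deletion, and every subsequent iteration would then work with the wrong module.

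A second, related soft spot is your closing step, where correctness of the output is obtained ``by substituting the final $\calS$ and $\{\nu_s\}$ into the formula of Theorem~\ref{theorem:structure}.'' That theorem asserts the existence of \emph{some} admissible $\calS_p$ and $\nu_{(p,s)}$; to conclude that the algorithm's particular output satisfies the displayed formula for all $t\ge1$ — including the skipped index $t-1$ (the $b(t)$ term) and all $t$ beyond the stopping point — one needs precisely the two facts the paper extracts from the same observation: that $\nu_t$ has the $(p^{t-1})$-generating property when $\deg\nu_t=\deg\nu_{t-1}$, and that at the stopping point $\deg\nu_t=\deg\mu_B$ the polynomial $\mu_B$ is a $(p^s)$-minimal polynomial for all $s\ge t$. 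You explicitly flag this index bookkeeping as ``the main obstacle'' rather than resolving it; the missing ingredient is the application of Lemma~\ref{lemma:degree-lower-bound} described above. (Your termination argument, and your handling of the vector $\pmb{g}$ via Corollary~\ref{cor:mccoy-form}, are essentially the paper's and are fine.)
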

\begin{proof}
By definition, $(\deg(\nu_s))_{s\ge 0}$ is a non-decreasing sequence which is bounded from above by $\deg(\mu_B)$. Hence this sequence eventually 
stabilizes. Moreover, as shown in \cite[Proposition~2.22]{Rissner2016}, the sequence always stabilizes at the value $\deg(\mu_B)$, that is, there 
exists $s_0 \ge 0$ such that $\deg(\nu_s) = \deg(\mu_B)$ for all $s\ge s_0$. This implies that the algorithm terminates. 

For the correctness, observe first that Algorithm~\ref{algorithm:computenu} computes a $(p^t)$-minimal polynomial $\nu_t$ with the $(p^t)$-generating 
property. Next, we explain the choice of $\pmb{g}$. As $\nu_t\in \nullideal{p^t}{B}$, there is some $q_i\in D[X]$ such that $\nu_tb_i\equiv 
\chi_Bq_i\pmod{p^t}$ by Corollary~\ref{cor:mccoy-form}. Thus we have $r_i\equiv 
\chi_B(q_i-g_i)\pmod{p^t}$. As $\chi_B$ is monic, the degree of the right hand side modulo $p^t$ exceeds the degree of $r_i$ unless $q_i\equiv 
g_i\pmod{p^t}$. 

If $\deg(\nu_t) = \deg(\nu_{t-1})$, then $\nu_t$ is a $(p^{t-1})$-minimal polynomial and $\nu_t-\nu_{t-1}$ is a polynomial in $\nullideal{p^{t-1}}{B}$ with degree 
less than $\deg(\nu_{t-1})$. By Lemma~\ref{lemma:degree-lower-bound}, this implies that $\nu_t-\nu_{t-1}\in \nullideal{p^{t-1}}{B} \cap pD[X] = 
p\nullideal{p^{t-2}}{B}$. Hence 
\begin{equation*}
  (\nu_t) + p\nullideal{p^{t-2}}{B} = (\nu_{t-1}) + p\nullideal{p^{t-2}}{B} = \nullideal{p^{t-1}}{B}
\end{equation*}
and thus $\nu_t$ has the $(p^{t-1})$-generating property.
Observe that this is in particular the case if the Algorithm reaches its stopping point, that is, if $\deg(\nu_t) = \deg(\mu_B)$. Then $\mu_B$ is a 
$(p^s)$-minimal polynomial for all $s \ge t$.

The remaining proof consists of repeated application of Proposition~\ref{proposition:M-N-generators} in both directions.
\end{proof}

\begin{remark}
Gröbner bases provide an alternative to Algorithm~\ref{algorithm:computenu}. Adams and Loustaunau describe in \cite[Ch.~4.5]{AdamsLoustaunau1994} how 
to extend the theory of Gröbner bases to polynomial rings over principal ideal domains. If $\calG$ is a Gröbner basis of $\nullideal{p^t}{B}$, we set
$\widehat \calG = \{ g\in \calG \mid p\nmid\lc(g) \}$. One can show that $\widehat\calG \neq \emptyset$ and if $g$ is a polynomial of minimal degree 
in $\widehat \calG$, then $ug + vp^tX^{\deg(g)}$ is a $(p^t)$-minimal polynomial where $u,v\in D$ such that $u\lc(g) + vp^t = 1$.
\end{remark}

\begin{example}
We demonstrate Algorithm~\ref{algorithm:everything} and compute $(2^t)$-minimal polynomials for the matrix 
\begin{equation*}
B =
\left(\begin{array}{rrr}
1 & 0 & 1 \\
1 & -2 & -1 \\
10 & 0 & 0
\end{array}\right)
\end{equation*}
for $t\ge 1$. The minimal and characteristic polynomial of $B$ is $\mu_B = \chi_B = X^{3} + X^{2} - 12X - 20$. We skip the computation of $\nu_1$ 
and claim that $\nu_1 = X^2 + X$ is a $(2)$-minimal polynomial (one can check that its residue class is the minimal 
polynomial of $\overline{B}$ over the field $\Z/2\Z$). 

Hence $\nullideal{1}{B} = \nu_1\Z[X] + 2\Z[X]$ and there exists a vector $\pmb{g} \in \Z[X]^{9}$ such that the columns of 
$G_1 = \blockrow{2I}{
\begin{matrix}
 \nu_1\\\pmb{g}
\end{matrix}
}$
generate $\mccoymodule{1}{A}$ where $A = \blockrow{\pmb{b}}{-\chi_BI}$ with 
\begin{equation*}
 \pmb{b} = 
  (X^{2} + 2X , 0 , X + 2 , X - 10 , X^{2} - X - 10 , -X + 2 , 10X + 20 , 0 , X^{2} + X - 2)^t
\end{equation*}
(cf.~Proposition~\ref{proposition:M-N-generators}). Algorithm~\ref{algorithm:everything} performs polynomial long divisions to determine 
\begin{equation*}
 \pmb{g} = 
 (X + 2 , 0 , 1 , 1 , X - 1 , -1 , 10 , 0 , X + 1)^t.
\end{equation*}

Next, Algorithm~\ref{algorithm:everything} calls
Algorithm~\ref{algorithm:lifting} to compute a matrix $F_2\in M_{10,2}(\Z[X])$
such that the columns of
$\begin{pmatrix}{4I}&{F_2}&2G_1\end{pmatrix}$ generate $\mccoymodule{2}{A}$. Without giving details here, we claim that 
\begin{equation*}
 F_2= 
\left(\begin{array}{rrrrrrrrrr}
2X^{2} + 2X & 2X & 0 & 2 & 2 & 2X + 2 & 2 & 0 & 0 & 2X + 2 \\
X^{2} + 3X + 2 & X + 4 & 0 & 1 & 1 & X + 1 & -1 & 10 & 0 & X + 3
\end{array}\right)^t
\end{equation*}
is such a matrix. Hence $\{2X^{2} + 2X, X^{2} + 3X + 2\}$ is a set with the $(4)$-generating property. We can apply 
Algorithm~\ref{algorithm:computenu} which removes the first polynomial as it is an element of $2\Z[X]\cap 
\nullideal{2}{B} = 2\nullideal{1}{B}$. Hence $\{X^{2} + 3X + 2\}$ has the $(4)$-generating property and by 
Proposition~\ref{proposition:singletons-with-generating-property}, $\nu_2 = X^{2} + 3X + 2$ is a $(4)$-minimal polynomial. 
 
If $\pmb{f}$ denotes the second column of $F_2$, then the columns of $G_2 = \blockrow{4I}{\pmb{f}}$ generate 
$\mccoymodule{2}{A}$. In the next step, we apply again Algorithm~\ref{algorithm:lifting} to compute 
\begin{equation*}
F_3=
\left(\begin{array}{rrrrr}
X^{3} + 7X^{2} + 6X & X^{2} + 8X + 24 & 0 & X + 8 & \ldots \\
X^{3} + 3X^{2} + 2X & X^{2} + 4X + 16 & 0 & X + 4 & \ldots
\end{array}\right)^t
\end{equation*}
such that the columns of $\begin{pmatrix}{8I}&{F_3}&2G_2\end{pmatrix}$ generate 
$\mccoymodule{3}{A}$. It follows that $\{X^{3} + 7X^{2} + 6X, X^{3} 
+ 3X^{2} + 2X\}$ has the $(8)$-generating property. Since 
\begin{equation*}
 X^{3} + 7X^{2} + 6X \in  (X^{3} + 3X^{2} + 2X) + 2\nullideal{4}{B},
\end{equation*}
it follows that 
$\nu_3 = X^{3} + 3X^{2} + 2X$ is an $(8)$-minimal polynomial. However, since the degree of $\nu_3$ is equal to $\deg(\mu_B)$, it follows that $\mu_B$ 
is a $(2^t)$-minimal polynomial for $t\ge 3$. Note that $\calS_2 = \{2\}$.
\end{example}

\subsection{Run-time and memory usage in practice}

Table~\ref{table:run-time-mem} displays average run-time and memory usage for 
Algorithm~\ref{algorithm:everything} for a dense random integer matrix $B$ of size $n$ and a prime 
number $p$. Note that only instances with non-trivial $(p^t)$-minimal polynomials were taken 
into account, see Section~\ref{sec:unimportant-primes} below. 
To find such instances, Theorem~\ref{theorem:trivial-primes} below provides a strategy to test only 
a finite number of primes $p$ for a given matrix $B$. Table~\ref{table:run-time-mem} also 
contains the total number of pairs $(B,p)$ to which we applied Algorithm~\ref{algorithm:everything} 
and the number of pairs $(B,p)$ among them with non-trivial $(p^t)$-minimal polynomials.

All computations were done in the free open-source mathematics software system 
SageMath (Version 7.6.beta6) on a machine with an Intel(R) Core(TM) i5-4690S CPU @ 3.20GHz 
processor. 

However, the current implementation of the Smith normal form in SageMath is designed to deal with 
matrices in general principal ideal domains and does not exploit the Euclidean structure 
of univariate polynomial rings over fields. We experienced memory issues using this 
implementation in Algorithm~\ref{algorithm:lifting}. For this reason we implemented the algorithm 
presented in \cite{KaBa:1979:snf} which is also applicable to matrices with entries in a univariate 
polynomial ring over a field.

In addition, it is worth mentioning that large prime numbers can cause a significant increase in 
run-time and memory usage. For example, for $p = 366388788500439413183777$  
Algorithm~\ref{algorithm:everything} takes $5006.08$ seconds and $207.5$ MB given the input matrix 
\begin{equation*}
B =
\begin{pmatrix}
 -1 & -1 &  6 &  3 &  3 & -1 & 11 & -2 & -1 & -2 \\
 -2 & -1 &  9 & -1 & -2 &  1 &  1 &  3 & -1 & -2 \\
 -1 & -6 & -6 & -1 & -4 &  5 &  1 &  1 & -4 &  1 \\
  1 &  1 &  1 & -2 &  1 &  2 & -1 &  1 &  1 & 13 \\
 -1 &  1 & -1 &  3 & -2 & -4 & -1 & -1 &  4 & -4 \\
 -4 &  1 &  1 &  2 & -1 &  2 &  5 & -2 & -1 &  1 \\
 14 &  1 &  1 & -1 &  1 &  2 &  1 &  3 &  1 & -1 \\
 -3 &  1 & -1 &  1 & -3 &  4 & -2 &  2 &  6 & 11 \\
 -2 &  1 & -1 &  1 &  1 &  1 &  6 &-23 & -1 &  1 \\
 -1 & -1 & -1 &  1 &  3 & -1 & -3 &  1 &  1 & -2
\end{pmatrix}
\in M_{10}(\Z).
\end{equation*}
In this particular example however, it turns out that this value of $p$ only
occurs as a root of the determinant of the transformation matrix (see 
Theorem~\ref{theorem:trivial-primes}). In fact, $B$ has no non-trivial 
$(p^t)$-minimal polynomial for any $p$.
As we decided to only include matrices with non-trivial $(p^t)$-minimal
polynomials, this matrix (along with many other examples) does not contribute to
the timings.

\begin{table}[h]
 \begin{tabular}{|*{12}{c|}}
\hline
$n$   & 2 & 3 & 4 & 5 & 6 & 7 & 8 & 9 & 10 & 11\\
\hline
sec   & 0.03 & 0.12 & 0.57 & 2.53 & 8.95 & 28.81 & 80.51 & 193.22 & 501.31 & 983.66\\
\hline
MB & 0.014 & 0.041 & 0.073 & 0.151 & 0.276 & 0.611 & 0.331 & 0.355  & 0.25 & 2.603 \\
\hline
non-trivial   & 73 & 175 & 242 & 329 & 184 & 112 & 111 & 19 & 5 & 13 \\
\hline
total & 587 & 1624 & 2651 & 3571 & 2009 & 1389 & 1489 & 257 & 114 & 106\\
\hline
 \end{tabular}
 \label{table:run-time-mem}
\caption{Run-time in seconds and memory usage in megabytes of 
Algorithm~\eqref{algorithm:everything} for  primes $p$ and integer matrix instances of size $n$
with non-trivial $(p^t)$-minimal polynomials.}
\end{table}

\section{Primes with trivial \texorpdfstring{$(p^t)$}{(p-power-t)}-minimal 
polynomials}\label{sec:unimportant-primes}

In this section, we show that for all but finitely many prime elements $p$ and all $t\ge 1$, $\mu_B$ is a $(p^t)$-minimal polynomial. This further implies that the $(p^t)$-ideal 
$\nullideal{p^t}{B}$ of $B$ is generated by $\mu_B$ and the constant $p^t$.

This assertion has been shown before as auxiliary result in the proof of \cite[Theorem~4.3]{Rissner2016}. In order to make it 
more accessible, we restate it here as proper theorem together with a proof.

\begin{theorem}[{\cite{Rissner2016}}]\label{theorem:trivial-primes}
Let $D$ be a principal ideal domain and $B\in \Mn{D}$ a square matrix with minimal polynomial $\mu_B$. Then for all but finitely many prime elements $p\in D$ and all $t\ge 1$
\begin{equation*}
 \nullideal{p^t}{B} = \mu_B D[X] + p^t D[X].
\end{equation*}
\end{theorem}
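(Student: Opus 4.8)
The plan is to find a single matrix $T\in \Mn{D}\cap \GL_n(K)$ such that $TBT^{-1}$ is in rational canonical form, and to show that every prime $p$ not dividing $\det(T)$ satisfies the claimed identity. The key observation is that for such $p$, conjugation by $T$ is an isomorphism of $D/p^tD$-algebras $\Mn{D/p^tD}\to\Mn{D/p^tD}$, so it suffices to treat the case where $B$ itself is in rational canonical form.

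First I would record the easy inclusion $\mu_B D[X] + p^t D[X]\subseteq \nullideal{p^t}{B}$, which holds for \emph{every} prime $p$ since $\mu_B(B)=0$. For the reverse inclusion, I reduce to the block-diagonal case: if $B = \diag(C_1,\ldots,C_k)$ with each $C_i$ a companion matrix, then $\nullideal{p^t}{B} = \bigcap_i \nullideal{p^t}{C_i}$, while $\mu_B$ is the least common multiple of the $\mu_{C_i}$. The heart of the matter is then a single companion matrix $C$ of a monic polynomial $g$: here $\mu_C = \chi_C = g$, and I claim that for $p\nmid 1$ (i.e., always, in this normalized situation) $\nullideal{p^t}{C} = gD[X] + p^tD[X]$. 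This follows from Corollary~\ref{cor:mccoy-form}: $f\in\nullideal{p^t}{C}$ means $\adj(X-C)f \equiv Q\chi_C \pmod{p^t}$ for some $Q$; looking at the entry of $\adj(X-C)$ equal to $1$ (which exists for a companion matrix, in the corner), one gets $f \equiv (\text{something})\cdot g \pmod{p^t}$ directly, giving $f\in gD[X]+p^tD[X]$.

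The step that needs care is passing from a single prime-free companion block back to an arbitrary $B$ via the transformation matrix $T$: I must argue that conjugation by $\overline T\in\Mn{D/p^tD}$ is invertible precisely when $p\nmid\det(T)$ (Cramer's rule / the adjugate formula makes $\overline T$ invertible over $D/p^tD$ since $\det(\overline T)$ is a unit), and that this conjugation carries $\nullideal{p^t}{B}$ to $\nullideal{p^t}{TBT^{-1}}$ and fixes $\mu_B = \mu_{TBT^{-1}}$ and $p^t$. Then the finitely many ``bad'' primes are exactly the prime divisors of $\det(T)$, of which there are finitely many since $\det(T)\neq 0$. The main obstacle I anticipate is purely bookkeeping: being careful that the rational canonical form is block diagonal with companion blocks whose minimal polynomials multiply/combine correctly into $\mu_B$, and that the reduction $\nullideal{p^t}{\diag(C_i)} = \bigcap_i\nullideal{p^t}{C_i}$ together with the coprimality structure of the invariant factors yields $\sum$/$\mathrm{lcm}$ behaving as $\mu_B D[X]+p^tD[X]$; this is where one invokes that for the invariant factors $g_1\mid g_2\mid\cdots\mid g_k$ one has $\mu_B = g_k$, so the intersection of the $\nullideal{p^t}{C_i}$ collapses to $g_k D[X]+p^tD[X]$.
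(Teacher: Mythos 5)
Your proposal is correct, but it takes a genuinely different route from the paper's in the main step. Both proofs use the same transformation matrix $T\in\Mn{D}\cap\GL_n(K)$ with $TBT^{-1}=C$ in rational canonical form and declare the bad primes to be the divisors of $\det(T)$; for this to make sense you should state explicitly (as the paper does) that the invariant factors lie in $D[X]$ because $D$ is integrally closed, so that $C\in\Mn{D}$ and the reduction modulo $p^t$ of the conjugation identity is legitimate. From there the paper only proves the case $t=1$: it observes that $\overline{C}$ is the rational canonical form of $\overline{B}$ over the field $D/pD$, hence $\overline{\mu_B}$ is the minimal polynomial of $\overline{B}$, and then obtains the statement for all $t\ge 1$ by invoking the correctness of Algorithm~\ref{algorithm:everything} and Theorem~\ref{theorem:structure} (the algorithm terminates in its first iteration with $\calS=\emptyset$). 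You instead work directly modulo $p^t$ for every $t$: conjugation by $\overline{T}$ identifies $\nullideal{p^t}{B}$ with $\nullideal{p^t}{C}$, the block structure gives $\nullideal{p^t}{C}=\bigcap_i\nullideal{p^t}{\calC_{g_i}}$, McCoy's description together with the unit entry of $\adj(X-\calC_{g_i})$ gives $\nullideal{p^t}{\calC_{g_i}}=g_iD[X]+p^tD[X]$ (one could equally use that $I,\calC_{g_i},\ldots,\calC_{g_i}^{\deg g_i-1}$ are linearly independent over $D/p^tD$), and the divisibility chain $g_1\mid\cdots\mid g_k=\mu_B$ makes the ideals $g_iD[X]+p^tD[X]$ nested, so the intersection is simply the smallest one, $\mu_BD[X]+p^tD[X]$ --- no lcm argument is actually needed. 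Your version is more self-contained, since it avoids appealing to the structure theorem and the correctness of the main algorithm; the paper's version is shorter because it delegates the passage from $t=1$ to general $t$ to machinery already established.
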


\begin{proof}
It suffices to show that $\mu_B$ is a $(p)$-minimal polynomial for all but finitely many prime elements $p\in D$. If this is the case (for a fixed $p$), then Algorithm~\ref{algorithm:everything} stops in the first iteration of the while 
loop returning $\calS = \emptyset$. Hence, $\nullideal{p^t}{B} = \mu_B D[X] + p^t D[X]$ for all $t\ge 0$ by 
Theorem~\ref{theorem:everything-is-correct}. 

Considered as a matrix over the quotient field $K$ of $D$, $B$ is similar to its rational canonical form $C$, that is, 
there exists a matrix $T\in \GL_n(K)$ such that
\begin{align}\label{eq:similarity-over-K}
 TBT^{-1} = C = \calC_{\mu_B} \oplus \cdots \oplus \calC_{\mu_1} 
\end{align}
where $\mu_1\mid\cdots\mid\mu_r = \mu_B$ are the invariant factors of $B$ (in $K[X]$) and $\calC_{\mu_i}$ denotes the companion matrix of the polynomial 
$\mu_i$ for $1\le i \le r$ (cf.~\cite[Theorem~XIV.2.1]{Lang2002}). 
Since $D$ is integrally closed, every monic factor in $K[X]$ of a monic polynomial in $D[X]$ is already in $D[X]$, 
cf.~\cite[Ch.~5,~§1.3,~Prop.~11]{Bourbaki_CommAlg}. Therefore $\mu_i\in D[X]$ for $1\leq i\leq r$ because $\mu_i \mid \mu_B$ and $\mu_B$ divides the 
characteristic polynomial $\chi_B\in D[X]$.

Hence the rational canonical form $C$ of $B$ is a matrix with entries in $D$. Moreover, we can choose $T \in \Mn{D}$. However, in general, the 
similarity relation of $B$ and $C$ does not hold over the domain $D$, that is, we cannot assume $T\in \GL_n(D)$.  

Let $p$ be a prime element that does not divide $\det(T)$. Then $\det(T)$ is invertible in the localization $D_{(p)}$ of $D$ at $p$ and  $T^{-1} = 
\det(T)^{-1} \adj(T) \in \GL_n(D_{(p)})$. This allows to reduce Equation~\eqref{eq:similarity-over-K} modulo $p$
\begin{align*}
 \overline{T}\,\overline{B}\,\overline{T}^{-1} = \overline{TBT^{-1}} = \overline{C} = \calC_{\overline{\mu_B}} \oplus \cdots \oplus 
\calC_{\overline{\mu_1}} 
\end{align*}
(where we identify the residue fields of $D$ and $D_{(p)}$ modulo $p$). Hence $\overline{C}$ is the rational canonical form of $\overline{B}$
which implies that $\overline{\mu_B}$ is the minimal polynomial of $\overline{B}$. Equivalently, $\mu_B$ is a $(p)$-minimal polynomial of $B$. 
The assertion follows since $\det(T)$ has only finitely many prime divisors.
\end{proof}

The choice of the transformation matrix in the proof of Theorem~\ref{theorem:trivial-primes} is not unique. Moreover, the prime divisors of different 
transformation matrices may not coincide as the following example demonstrates.

\begin{example}\label{example:candidates-not-unique}
Let
$B = \begin{pmatrix}
       4 & 5 \\
       3 & 5
     \end{pmatrix} \in M_2(\Z)$. 
The rational canonical form of $B$ is 
$C =
\begin{pmatrix}
    0 & -5 \\
    1 & 9
\end{pmatrix}$.
The matrices 
$T = \begin{pmatrix}
       3 & -4 \\
       0 & 1
     \end{pmatrix}$
and      
$S = \begin{pmatrix}
        1  & 2 \\
        -2 & -3
     \end{pmatrix}$
both satisfy (over $\Q$)
\begin{equation*}
 TBT^{-1} = C = SBS^{-1}.
\end{equation*}
Since $\det(S) = 1$, $B$ is similar to $C$ over $\Z$. This implies that $\mu_B$ is a $(p)$-minimal polynomial for all primes $p$ of $\Z$. However, 
$\det(T) = 3$.
\end{example}

\section{Finite description of \texorpdfstring{$(p^t)$}{(p-power-t)}-ideals for all 
\texorpdfstring{$t$}{(t)}}
\label{sec:finite-nr-generators}

Finally, we give a proof of Theorem~\ref{theorem:structure} which has been stated above 
in Section~\ref{sec:results}. For the reader's convenience we restate it at this point.

\begin{reptheorem}{theorem:structure}[{\cite[Theorem~2.19, Corollary~2.23]{Rissner2016}}]
Let $p$ be a prime element of $D$. Then there is a finite set $\calS_p$ of positive integers and 
monic polynomials $\nu_{(p,s)}$ for $s\in\calS_p$  such that for $t\ge 1$,
\begin{equation*}
    \nullideal{(p^t)}{B}=\mu_BD[X] + p^tD[X] + 
      \sum_{\substack{s\in\calS_p \\ s \le  b(t) }} p^{\max\{0,t-s\}}\nu_{(p,s)}D[X]  
  \end{equation*}
  holds where $ b(t) = \inf\{r\in \calS_p \mid r \ge t\}$. The degree of
  $\nu_{(p,s)}$ is strictly increasing in $s\in \calS_p$ and $\nu_{(p,s)}$ is a monic
  polynomial of minimal degree in $\nullideal{(p^s)}{B}$. If $t\le
  \max\calS_p$, then the summand $\mu_BD[X]$ can be omitted.
\end{reptheorem}

\begin{proof}
It has been shown in \cite[Theorem~2.19]{Rissner2016} that 
\begin{equation}\label{eq:explain-struc-1}
 \nullideal{(p^t)}{B} = \sum_{i\in \calI_t} p^{t-i}\nu_{(p,i)}D[X]
\end{equation}
where $\calI_t$ denotes the $t$-th index set of $B$ with respect to $p$ and $\nu_{(p,i)}$ are monic 
polynomials of minimal degree in $\nullideal{(p^i)}{B}$ whose degree is strictly increasing in  
$i\in \calI_t$. Moreover, it follows from 
\cite[Corollary~2.23]{Rissner2016} that for every $p$ there exists an integer 
$m$ such that
\begin{equation}\label{eq:explain-struc-2}
 \nullideal{(p^t)}{B} = \mu_B D[X] + p^{t-m}\nullideal{(p^m)}{B} 
\end{equation}
holds for all $t\ge m$. We set $\calS_p = \calI_m\setminus \{0,m\}$. Note that $\nu_{(p,0)}= 1$ is 
a monic polynomial of minimal degree in $\nullideal{(p^0)}{B} = D[X]$. For $t\ge m$, the assertion 
now follows from Equations~\eqref{eq:explain-struc-1} and~\eqref{eq:explain-struc-2}.

If $t<m$, it follows from \cite[Definition~2.16, Remark~2.18]{Rissner2016} that  $\calI_t\setminus 
\{0,t\} = \calS_p \cap \{1,\ldots,t-1\}$ and $\nu_{(p,b(t))}$ is also a feasible choice for 
$\nu_{(p,t)}$. Therefore, the assertion follows from Equation~\eqref{eq:explain-struc-1}.
\end{proof}

\bibliographystyle{abbrv}
\bibliography{biblio}  

\begin{thebibliography}{10}

\bibitem{AdamsLoustaunau1994}
W.~W. Adams and P.~Loustaunau.
\newblock {\em An introduction to {G}r\"obner bases}, volume~3 of {\em Graduate
  Studies in Mathematics}.
\newblock American Mathematical Society, Providence, RI, 1994.

\bibitem{Bourbaki_CommAlg}
N.~Bourbaki.
\newblock {\em Commutative Algebra, Chapters 1--7}.
\newblock Springer, Berlin, 1989.

\bibitem{Brown1993}
W.~C. Brown.
\newblock {\em Matrices over Commutative Rings}.
\newblock Monographs and Textbooks in Pure and Applied Mathematics. Marcel
  Dekker, Inc., New York, 1993.

\bibitem{Brown:1998:null}
W.~C. Brown.
\newblock Null ideals and spanning ranks of matrices.
\newblock {\em Comm. Algebra}, 26(8):2401--2417, 1998.

\bibitem{trac:21992}
C.~Heuberger and R.~Rissner.
\newblock Compute {$J$}-ideal of a matrix.
\newblock \url{http://trac.sagemath.org/ticket/21992}, 2016.

\bibitem{KaBa:1979:snf}
R.~Kannan and A.~Bachem.
\newblock Polynomial algorithms for computing the {S}mith and {H}ermite normal
  forms of an integer matrix.
\newblock {\em SIAM J. Comput.}, 8(4):499--507, 1979.

\bibitem{Lang2002}
S.~Lang.
\newblock {\em Algebra}, volume 211 of {\em Graduate Texts in Mathematics}.
\newblock Springer-Verlag New York, 2002.

\bibitem{McCoy1948}
N.~H. McCoy.
\newblock {\em Rings and ideals}.
\newblock Carus Monograph Series, no. 8. The Open Court Publishing Company,
  LaSalle, Ill., 1948.

\bibitem{Rissner2016}
R.~Rissner.
\newblock Null ideals of matrices over residue class rings of principal ideal
  domains.
\newblock {\em Linear Algebra Appl.}, 494:44--69, 2016.

\bibitem{SageMath:2016:7.5}
{The SageMath Developers}.
\newblock {\em {SageMath} {M}athematics {S}oftware ({V}ersion 7.5)}, 2017.
\newblock \url{http://www.sagemath.org}.

\end{thebibliography}
  
\end{document}

